\newtheorem{theorem}{Theorem}[section]
\newtheorem{lemma}[theorem]{Lemma}
\theoremstyle{definition}
\newtheorem{definition}[theorem]{Definition}
\newtheorem{example}[theorem]{Example}
\newtheorem{proposition}[theorem]{Proposition}
\newtheorem{corollary}[theorem]{Corollary}
\newtheorem{conjecture}[theorem]{Conjecture}
\theoremstyle{remark}
\newtheorem{remark}[theorem]{Remark}
\theoremstyle{problem}
\def\FF{\mathbb{F}}
\def\ZZ{\mathbb{Z}}
\def\cal{\mathcal}  
\newcommand{\A}{{\cal A}}
\def\abs#1{\left|#1\right|}                 
\numberwithin{equation}{section}
\begin{document}

\title[Skeleton Simplicial Evaluation Codes]{Skeleton Simplicial Evaluation Codes}


\author{James Berg}
\address{United States Naval Academy, Annapolis MD, USA}
\curraddr{}
\email{m110588@usna.edu}

\author{Max Wakefield}
\address{United States Naval Academy, Annapolis MD, USA}
\email{wakefiel@usna.edu}


\date{}

\dedicatory{}
\maketitle

\begin{abstract}
For a subspace arrangement over a finite field we study the evaluation code defined on the arrangements set of points. The length of this code is given by the subspace arrangements characteristic polynomial. For coordinate subspace arrangements the dimension is bounded below by the face vector of the corresponding simplicial complex. The minimum distance is determined for coordinate subspace arrangements where the simplicial complex is a skeleton. 
\end{abstract}

\section{Introduction} Evaluation codes have provided a solid foundation for interactions between commutative algebra, algebraic geometry, and coding theory. It is particularly fruitful to use tools in algebraic geometry to create efficient codes and enumerate their properties, for example see \cite{TV-91}, \cite{H-92}, \cite{GLS-05},  and \cite{DRT-01}. Evaluation codes associated to Order Domains and valuation rings have also shown to give remarkable results, for example see \cite{CLO-98}, \cite{GP-02}, and \cite{O-01}. On the other hand through the work of Relinde Jurrius and Ruud Pellikaan in \cite{RJRP1} and Stefan Toh{\v{a}}neanu in \cite{T-09} and \cite{T-10} the theory of hyperplane arrangements and subspace arrangements has shown be particularly useful in coding theory. 

However very little work has focused on defining Evaluation codes by hyperplane or subspace arrangements. This is the subject of the paper. In particular, the focus is on coordinate arrangements where the associated defining ideal is a square free monomial ideal called the Stanley-Riesner ideal. The main idea is to use basic results on subspace arrangements, Stanley-Riesner rings, and simplicial complexes to understand the associated evaluation codes. 

In this section we introduce the codes and state the main result, Theorem \ref{main}, which gives the minimum distance for degree one binary skeleton simplicial evaluation codes. Section \ref{proof} gives the proof of of Theorem \ref{main}. Then Section \ref{j>1} studies higher degree version of binary skeleton simplicial evaluation codes and lists a conjecture. Finally Section \ref{Hamming} shows how these codes are related to Hamming codes.

\subsection{Subspace Arrangements}
Let $V$ be a vector space of dimension $\ell$ over a finite field $\FF$ of $q$ elements. A subspace arrangement $\A=\{X_1,\dots ,X_t\}$ in $V$ is a finite collection of linear subspaces $X_i\subseteq V$. For a general reference for subspace arrangements see \cite{Bjo}.  Let $S=\FF_q [x_1,\dots ,x_\ell]$ be the symmetric algebra of the dual vector space $V^*$. Denote the points of $V$ in $\A$ by $P(\A)=\bigcup\limits_{i=1}^t X_i=\{p_1,\dots ,p_n\}$.  Additionally, let $I(\A)=\{ f\in S| f(P(\A))=0\}$ be the radical defining ideal of the variety $P(\A)$. 

Let $L(\mathcal{A})$ consist of all intersections of the subspaces of $\A$ (note that the empty intersection is defined as the entire vector space, $V$).  Then since every subspace contains the origin $L(\mathcal{A})$ is a lattice and a poset by reverse inclusion.  Next, the M\"obius function, $\mu$, on $L(\A)$ is $\mu :L(\mathcal{A})\longrightarrow\ZZ$ defined recursively by
$\mu(V)=1$ and $\mu(X)=-\sum\limits_{Y\lneq X} \mu(Y)$. From this function, the characteristic polynomial for $\A$, $\chi (\mathcal{A},t)$, is defined as $$\chi (\mathcal{A},t)=\sum_{X\in L(\mathcal{A})} \mu(X)t^{\mathrm{dim}(X)}.$$ In \cite{CAA}, Christos Athanasiadis proved that the characteristic polynomial determines the number of the set of points of $\mathcal{A}$: $$|P(\mathcal{A})|=q^\ell-\chi (\mathcal{A},q).$$

\subsection{Definition of $C(\A,j)$}\label{evalcode} 
Define the evaluation map $ev_\A :S_{\leq j}\to \FF^n$ by $$ev_\A (f)=(f(p_1),\dots,f(p_n))$$ where $S_{\leq j}$ is the vector space of all polynomials of less degree than or equal to $j$ in $S=\FF_q [x_1,\dots ,x_\ell]$. Now we can define our main object of study.
\begin{definition}
 The image $C(\A,j)=im(ev_\A)$ is a linear subspace in $\FF_q^n$ that we call a \emph{subspace arrangement code}.
\end{definition}

Now as a direct result of the Athanasiadis' counting theorem in \cite{CAA} we have the following corollary.

\begin{corollary}\label{length}If $n$ is the length of a code associated with the subspace arrangement $\A$, with characteristic polynomial $\chi (\mathcal{A},q)$, then $$n=|P(\mathcal{A})|=q^\ell-\chi (\mathcal{A},q).$$\end{corollary}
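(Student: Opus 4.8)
The plan is to observe that this statement is simply the translation of Athanasiadis' counting formula into the language of the evaluation code $C(\A,j)$, so the proof amounts to unwinding two definitions and then quoting a result already recorded in the excerpt. First I would recall that the \emph{length} of a linear code realized as the image of a linear map into $\FF^n$ is by definition $n$, the number of coordinates of the ambient space (this is independent of whether the image is all of $\FF^n$). In our setup $ev_\A$ was defined on $S_{\leq j}$ with target $\FF^n$, where $n$ was fixed at the outset as the cardinality of $P(\A)=\bigcup_{i=1}^{t} X_i=\{p_1,\dots,p_n\}$; hence $n=|P(\A)|$ holds by construction. There are no multiplicities to account for here, since $P(\A)$ is a union of subspaces viewed as a set of points and $p_1,\dots,p_n$ are precisely its distinct elements.

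Second, I would invoke the theorem of Christos Athanasiadis quoted above, namely $|P(\A)|=q^\ell-\chi(\A,q)$. Chaining the two equalities gives $n=|P(\A)|=q^\ell-\chi(\A,q)$, which is exactly the assertion of the corollary.

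There is essentially no obstacle: all the mathematical content sits in Athanasiadis' theorem, which we take as a black box, and the corollary merely records that the combinatorially defined integer $q^\ell-\chi(\A,q)$ is the block length of the associated code. The one point I would spend a sentence on is well-definedness of the right-hand side: $\chi(\A,t)$ is a genuine polynomial in $t$, being a finite $\ZZ$-linear combination of monomials $t^{\dim X}$ indexed by the finite lattice $L(\A)$, so its evaluation $\chi(\A,q)$ makes sense and $q^\ell-\chi(\A,q)$ is a well-defined nonnegative integer, consistent with its role as a length.
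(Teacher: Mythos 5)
Your proposal is correct and matches the paper's reasoning: the corollary is stated there as an immediate consequence of Athanasiadis' counting theorem, with $n=|P(\A)|$ holding by the very definition of the evaluation map $ev_\A\colon S_{\leq j}\to\FF^n$. Your extra remark on the well-definedness of $\chi(\A,q)$ is fine but not needed.
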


\begin{figure}
 \includegraphics[width=2in]{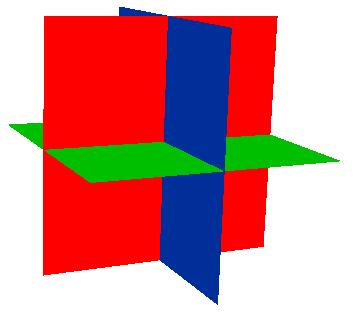}\caption{The coordinate planes} \label{coordinate}
\end{figure}
\begin{example}\label{[7,4,3]}
Let  $q=2$, $\ell=3$, $j=1$, and $\A$ be the $xy$-, $xz$-, and $yz$-planes, as seen in Figure \ref{coordinate} as viewed over the real numbers.  Then, $I(\A)={<x_1x_2x_3>}$.  Thus, $V=\FF_2^3$ and $S=\FF_q[x_1,x_2,x_3]$. The characteristic polynomial is $\chi (\A,t)=(t-1)^3$. The points of the arrangement are $P(\A)=\{(0,0,0),$ (1,0,0), (1,1,0), (1,0,1), (0,1,0), (0,1,1), $(0,0,1)\}$, so $|P(\A)|=7=2^3-\chi (\A,2)$.  Now, to find im$(ev_\A: S_{\leq 1} \rightarrow \FF_2^7)$, we write a basis for the subspace spanned by the image of the evaluation map as a matrix (the top row designates a point in $P(\A)$, and the first column delineates the polynomials in $S_{\leq 1}$ at which each point is evaluated):

$$\begin{array}{c|ccccccc}
  {} &(0,0,0)&(1,0,0)&(1,1,0)&(1,0,1)&(0,1,0)&(0,1,1)&(0,0,1)
\\
\hline
1  &  1    &   1   &   1   &   1   &   1   &   1   &   1
\\x_1&  0    &   1   &   1   &   1   &   0   &   0   &   0
\\x_2&  0    &   0   &   1   &   0   &   1   &   1   &   0
\\x_3&  0    &   0   &   0   &   1   &   0   &   1   &   1

\end{array}.$$

Close observation yields that the dimension is 4 and the minimum distance is 3.  Therefore, this code is a $[7,4,3]_2$ code and is hence permutation equivalent to the $[7,4,3]$ Hamming code.

\end{example}

\begin{remark}
 Note that if $\A$ is the entirety of $V=\FF_q^\ell$ (so $I(\A)={<0>}$) and $q=2$, then $P(\A)=V$.  Thus, the subspace arrangement code is equivalent to a Reed-Muller code.  Specifically, the code is $C(\FF_q^\ell,j)=\mathcal{R}(j,\ell)$.  Since skeletal codes (as discussed in Section \ref{skeleton}) are a punctured $C(\FF_q^\ell,j)$ code, they are also a punctured Reed-Muller code.
\end{remark}

\subsection{Simplicial Complexes}
We want to study the dimension and minimum distance of $C(\A,j)$ for any subspace arrangement $\A$, but doing so is difficult.  For the rest of the paper we will focus on subspace arrangements that are coordinate arrangements. We follow the standard formulation of the correspondence between coordinate arrangements and simplicial complexes written in \cite{Bjo} by Bj\"orner. For convenience, let $[k]=\{1,\ldots ,k\}$ be the set of numbers 1 through $k$. A \emph{simplicial complex}, $\Delta$, is a set of subsets of $[k]$ such that $$\begin{array}{c}(1) \,\mathrm{if}\, \sigma\in\Delta \,\mathrm{and}\,\tau\mathrm{\ is\ a\ subset\ of\ }\sigma,\,\mathrm{then}\, \tau\in\Delta,\,\mathrm{and} \\ (2)\, \mathrm{if}\, x\in [k],\, \mathrm{then}\, \{x\}\in\Delta\end{array}$$ where $[k]$ are called the vertices of $\Delta$ and the $\sigma$ are called the faces of $\Delta$. A $k$-simplex is a simplicial complex that contains all subsets of $[k]$. 

For a simplicial complex $\Delta$ with vertices $[\ell]$ and for any face $\sigma=\{i_1,\dots,i_s\} \in \Delta$ let $x^\sigma =x_{i_1}x_{i_2}\cdots x_{i_s}$. The Stanley-Riesner ideal of $\Delta$ is the ideal $I_\Delta=\{x_\sigma$ $|$ $\sigma\not\in \Delta\}\subseteq S$. For $\{\mathbf{b}_1,\dots,\mathbf{b}_\ell\}$ (a basis of $\FF_q^\ell$) and each subset $\sigma=\{i_1,\dots,i_s\}\subseteq[\ell]$, let $X_\sigma=\mathrm{span}\{\mathbf{b}_{i_1},\dots,\mathbf{b}_{i_s}\}$.  Then the coordinate subspace arrangement corresponding to $\Delta$ is $\mathcal{A}_\Delta=\{X_\sigma|\sigma\in \Delta\}$. 

\begin{definition}
 Let $\Delta$ be a simplicial complex with corresponding subspace arrangement $\A_\Delta$.  Then the \emph{simplicial evaluation code} corresponding to $\Delta$ is the subspace arrangement code $C(\A_\Delta,j)$ defined using the Stanley-Riesner ideal $I_\Delta$.
\end{definition}

\begin{remark}
The arrangement in Example \ref{[7,4,3]} is the coordinate arrangement who's simplicial complex is the empty triangle. 
\end{remark}

The dimension of $C(\A,j)$ for an arbitrary subspace arrangement or even $C(\A_\Delta ,j)$ can be very difficult to find. Hence for the remainder of this note we focus on the case when the size of the field is $q=2$. 

\subsection{Dimension}  In order to compute the dimension of these codes we will need the notion of a face vector. A \emph{face vector}, $[f_i]$, of $|Delta$ is the number of faces of dimension $i-1$ in $\Delta$.  That is, $f_i=|\{ \sigma \in \Delta : |\sigma|= i\}|$ for $0< i$ where we define $f_0:=1$.

\begin{proposition}\label{dimbsc} For binary simplicial evaluation codes the dimension is $k=\sum\limits^s_{i=0} f_i$ where $s=\mathrm{min}(j,\mathrm{dim}(\Delta))$.\end{proposition}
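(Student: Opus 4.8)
The plan is to show that the evaluation map $ev_\A\colon S_{\leq j}\to\FF_2^n$, when restricted to a well-chosen spanning set of monomials, has image whose dimension is exactly $\sum_{i=0}^s f_i$. First I would observe that over $\FF_2$ every point $p\in P(\A_\Delta)$ is a $0/1$ vector, so for any variable $x_m$ we have $x_m(p)^2=x_m(p)$; hence on the point set $P(\A_\Delta)$ every polynomial agrees with a squarefree polynomial, i.e. an $\FF_2$-linear combination of squarefree monomials $x^\tau$. Moreover, if $\tau\notin\Delta$ then $x^\tau\in I_\Delta$, so $x^\tau$ vanishes on $P(\A_\Delta)$. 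Therefore the image $C(\A_\Delta,j)$ is spanned by $\{ev_\A(x^\tau) : \tau\in\Delta,\ |\tau|\leq j\}$, and the number of such monomials is exactly $\sum_{i=0}^{s} f_i$ with $s=\min(j,\dim(\Delta))$ (here $f_0=1$ accounts for the constant monomial $1=x^\emptyset$, and $|\tau|\le j$ forces $|\tau|\le\dim(\Delta)+1$, but only $\tau\in\Delta$ survive). This gives the upper bound $k\le\sum_{i=0}^s f_i$.

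For the lower bound I must show these evaluation vectors are $\FF_2$-linearly independent. The key point is that the characteristic (indicator) point of a face $\sigma\in\Delta$, namely the vector $e_\sigma\in\FF_2^\ell$ with support $\sigma$, lies in $P(\A_\Delta)$, because $\sigma\in\Delta$ means $X_\sigma\in\A_\Delta$ and $e_\sigma\in X_\sigma$. I would set up the evaluation matrix $M$ with rows indexed by faces $\tau\in\Delta$ with $|\tau|\le j$ and columns indexed by the points $e_\sigma$ for $\sigma\in\Delta$ with $|\sigma|\le j$; the entry is $x^\tau(e_\sigma)=\prod_{m\in\tau}(e_\sigma)_m$, which equals $1$ if $\tau\subseteq\sigma$ and $0$ otherwise. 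Ordering the faces by size (refining to any linear order), this square submatrix $M$ is unitriangular: the diagonal entries $x^\tau(e_\tau)=1$, and $x^\tau(e_\sigma)=0$ whenever $\sigma$ comes before $\tau$ in a size-respecting order unless $\tau\subseteq\sigma$, which forces $|\tau|<|\sigma|$ — contradiction, or $\tau=\sigma$. Hence $M$ is invertible over $\FF_2$, the rows $ev_\A(x^\tau)$ are independent, and $k\ge\sum_{i=0}^s f_i$.

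Combining the two bounds gives $k=\sum_{i=0}^{s}f_i$ with $s=\min(j,\dim\Delta)$. I expect the main obstacle to be the reduction step rather than the linear algebra: one must argue carefully that \emph{every} polynomial of degree $\le j$, after reduction modulo the relations $x_m^2=x_m$ (valid only on the point set, not in $S$) and modulo $I_\Delta$, becomes an $\FF_2$-combination of squarefree monomials $x^\tau$ with $\tau\in\Delta$ and $|\tau|\le j$ — in particular that squaring-reduction never raises the effective degree, so the degree bound $|\tau|\le j$ is preserved. Once that is pinned down, the Möbius/characteristic-polynomial machinery is not even needed for this proposition; it only governs the length $n$, via Corollary \ref{length}. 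A clean way to phrase the reduction is that the composite $\FF_2[x_1,\dots,x_\ell]\to \FF_2^{P(\A_\Delta)}$ factors through the Stanley--Reisner ring $\FF_2[x_1,\dots,x_\ell]/I_\Delta$ and then through its reduction by the ideal $(x_m^2-x_m)$, whose $\FF_2$-basis is exactly $\{x^\tau:\tau\in\Delta\}$; the map to $\FF_2^{P(\A_\Delta)}$ on this basis is given by the (full, not truncated) incidence matrix of faces versus characteristic points of faces, which is unitriangular hence injective, so in fact $ev_\A$ is injective on the span of $\{x^\tau:\tau\in\Delta,|\tau|\le j\}$.
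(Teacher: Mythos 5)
Your argument follows essentially the same route as the paper's proof: the paper also builds a generator matrix whose rows are indexed by the monomials $x^\tau$ with $\tau\in\Delta$, $|\tau|\le j$, and whose columns are indexed by points, ordered so that the matrix is triangular; your unitriangular incidence submatrix (entry $x^\tau(e_\sigma)=1$ iff $\tau\subseteq\sigma$, with a size-respecting order) is exactly that construction made explicit. Your spanning step, reducing modulo $I_\Delta$ and the relations $x_m^2-x_m$, is in fact a cleaner justification than the paper's appeal to the Hilbert function of $S/I_\Delta$: the face vector counts the squarefree monomial basis of the Artinian quotient $S/(I_\Delta+(x_m^2-x_m))$ that you introduce, which is the ring that actually governs evaluation at $0/1$ points.

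One point does not match the statement as written. Your two bounds prove $k=\#\{\tau\in\Delta : |\tau|\le j\}$, which equals $\sum_{i=0}^{s}f_i$ with $s=\min(j,\dim\Delta+1)$, not $\min(j,\dim\Delta)$: once $j\ge\dim\Delta+1$, the faces of maximal cardinality $\dim\Delta+1$ are included in the count. Your parenthetical asserting that the number of surviving monomials is $\sum_{i=0}^{\min(j,\dim\Delta)}f_i$ is therefore not correct in that regime; for instance, for the empty triangle of Example \ref{[7,4,3]} with $j=2$, your (correct) independence argument gives dimension $7=f_0+f_1+f_2$, while the formula with $s=\min(2,1)=1$ gives $4$. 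This is really a tension with the proposition's convention for $\dim\Delta$ rather than a flaw in your construction --- Corollary \ref{k-dimension} at $j=h$ likewise requires the cap to be the maximal face cardinality --- and in the range $j\le\dim\Delta$, in particular for the $j=1$ codes of Theorem \ref{main}, your proof is complete and correct.
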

\proof Any $i$-face of $\Delta$ corresponds to a basis element of the subspace $C(\A_\Delta ,j)$ as long as $0\leq i\leq j$. And any $i$-face of $\Delta$ corresponds to a basis element of the vector space $S/I_{\Delta}$. Then one can construct an upper triangular generator matrix of $C(\A_\Delta ,j)$ by listing the rows by the basis elements of $S/I_{\Delta}$ in degree lexicographic ordering and by listing the columns corresponding to points similarly. Then it is well known that the Hilbert function of the algebra $S/I_{\Delta}$ is given by the face vector.\qed

\begin{corollary}
 Let $M$ denote the dimension of the minimum dimensional non-face in a simplicial complex $\Delta$.  Then, if $j\leq M-1$, the dimension of $C(\A_\Delta ,j)$ is $k=\sum\limits_{m=0}^j {\ell \choose m}$.
\end{corollary}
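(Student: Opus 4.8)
The plan is to obtain this as an immediate consequence of Proposition~\ref{dimbsc}; the only real content is translating the hypothesis on $M$ into information about the face vector $[f_i]$ and about $\dim(\Delta)$. First I would unwind the definition of $M$. A non-face of $\Delta$ is a subset $\sigma\subseteq[\ell]$ with $\sigma\notin\Delta$, and its dimension is $|\sigma|-1$; so the statement that the minimum-dimensional non-face has dimension $M$ says exactly that every subset of $[\ell]$ of size at most $M$ is a face of $\Delta$, while some subset of size $M+1$ is not. I would record two consequences: (i) for each $0\le i\le M$ all ${\ell\choose i}$ subsets of size $i$ lie in $\Delta$, so $f_i={\ell\choose i}$ for $0\le i\le M$, which is also consistent with the convention $f_0:=1={\ell\choose 0}$; and (ii) $\Delta$ has a face of size $M$, hence $\dim(\Delta)\ge M-1$.

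Next I would feed the hypothesis $j\le M-1$ into Proposition~\ref{dimbsc}. By (ii), $\dim(\Delta)\ge M-1\ge j$, so the truncation point $s=\min(j,\dim(\Delta))$ is simply $s=j$, and Proposition~\ref{dimbsc} gives $\dim C(\A_\Delta,j)=\sum_{i=0}^{j}f_i$. Every index in this sum satisfies $i\le j\le M-1\le M$, so (i) applies and $f_i={\ell\choose i}$ for all such $i$; therefore $\dim C(\A_\Delta,j)=\sum_{i=0}^{j}{\ell\choose i}=\sum_{m=0}^{j}{\ell\choose m}$, which is the claim.

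There is no substantial obstacle here—Proposition~\ref{dimbsc} already carries the weight by identifying the dimension with a truncation of the Hilbert function of $S/I_\Delta$—so the only place that needs care is the bookkeeping above: one must verify that $j\le M-1$ is strong enough both to force the cutoff $s$ to equal $j$ (via $\dim(\Delta)\ge M-1$) and to keep every term of the resulting sum in the range $i\le M$ where $f_i={\ell\choose i}$. (Implicit throughout is that $\Delta$ is not the full $(\ell-1)$-simplex, so that a minimal non-face, and hence $M$, exists; in the excluded case $\A_\Delta$ would contain $V$ itself and the code would be the Reed--Muller code $\mathcal{R}(j,\ell)$ of the earlier remark, for which the same binomial formula still holds.)
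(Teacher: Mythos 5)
Your proof is correct and is essentially the argument the paper intends: the corollary is stated as an immediate consequence of Proposition \ref{dimbsc}, and your bookkeeping (all subsets of size at most $M$ are faces, so $f_i={\ell\choose i}$ for $i\le M$, and $\dim(\Delta)\ge M-1\ge j$ forces the cutoff $s=j$) is exactly the translation needed. The parenthetical remark about the degenerate case where $\Delta$ is the full simplex is a sensible extra precaution but not a point of divergence.
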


While a convenient formula for dimension has been easily determined, the case for minimum distance is much more difficult. Hence for the remainder of this note we reduce our attention to only the following type of simplicial complex.

\begin{definition}\label{1}For $0\leq h\leq \ell$ an \emph{h-skeleton} is a simplicial complex, denoted by $\Delta (\ell,h)$, on $\ell$ vertices consisting of all possible $h-1$ to 0-dimensional faces.\end{definition}

\begin{definition}A binary \emph{h-skeleton code} is the binary evaluation code $C(\A_{\Delta(\ell ,h)},j)_2$ of the associated coordinate arrangement to the $h$-skeleton and is denoted $K(\ell , h,j).$\end{definition}

The length of these codes is obtained by counting the number of points that have at most $h$ non-zero entries. Hence, the length of $K(\ell ,h,j)$ is \begin{equation}\label{k-length}n=\sum\limits_{i=0}^h{ \ell \choose i}.\end{equation} The dimension can be found as a specialization of Proposition \ref{dimbsc}.

\begin{corollary}\label{k-dimension}

For $0\leq j\leq h\leq \ell$ $$\dim (K(\ell ,h,j))=\sum\limits_{i=0}^j{\ell \choose i}.$$

\end{corollary}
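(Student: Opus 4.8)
The plan is to obtain this as a direct specialization of Proposition~\ref{dimbsc}. The first step is to read off the face vector of the $h$-skeleton $\Delta(\ell,h)$ from Definition~\ref{1}: its faces are exactly the subsets of $[\ell]$ of cardinality at most $h$, so for every $0\le i\le h$ one has $f_i=\binom{\ell}{i}$, the number of $i$-element subsets of $[\ell]$. Note that this is consistent with the convention $f_0:=1=\binom{\ell}{0}$, and that $\Delta(\ell,h)$ has no face of cardinality exceeding $h$.

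The second step is to identify the cut-off $s$ appearing in Proposition~\ref{dimbsc}. Since the maximal faces of $\Delta(\ell,h)$ have cardinality $h$, the Hilbert function of $S/I_{\Delta(\ell,h)}$ vanishes in all degrees greater than $h$, so the truncation index $s=\min(j,\dim\Delta(\ell,h))$ relevant to the sum in Proposition~\ref{dimbsc} is simply $s=j$ whenever $j\le h$ --- which is exactly the hypothesis of the corollary. Substituting $s=j$ and $f_i=\binom{\ell}{i}$ into Proposition~\ref{dimbsc} then yields
$$\dim\bigl(K(\ell,h,j)\bigr)=\sum_{i=0}^{j}f_i=\sum_{i=0}^{j}\binom{\ell}{i},$$
as claimed.

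The only point that requires a little care is the bookkeeping between the two natural readings of ``dimension of $\Delta$'' (topological dimension $h-1$ versus top face-cardinality $h$), and this matters only at the extreme $j=h$. There one checks directly that the code fills all of $\FF_2^n$: by~\eqref{k-length} the length is $n=\sum_{i=0}^{h}\binom{\ell}{i}$, which equals the number of monomials $x^\sigma$ with $|\sigma|\le h$, and the upper-triangular generator matrix produced in the proof of Proposition~\ref{dimbsc} is then square and invertible. Alternatively, one can bypass Proposition~\ref{dimbsc} altogether: every $\sigma\subseteq[\ell]$ with $|\sigma|\le j\le h$ is a face of $\Delta(\ell,h)$, so the vectors $ev_{\A_{\Delta(\ell,h)}}(x^\sigma)$ over all such $\sigma$ are precisely the rows of that upper-triangular matrix, hence linearly independent; there are $\sum_{i=0}^{j}\binom{\ell}{i}$ of them, and after reducing modulo the field equations and $I_{\Delta(\ell,h)}$ every polynomial of degree at most $j$ evaluates to a linear combination of exactly these vectors. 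I do not anticipate any genuine obstacle: all the substance is already contained in Proposition~\ref{dimbsc}, and the corollary is a substitution.
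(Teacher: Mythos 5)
Your proof is correct and takes essentially the same route as the paper, which obtains the corollary precisely as a specialization of Proposition \ref{dimbsc} with $f_i=\binom{\ell}{i}$ for the $h$-skeleton and truncation index $j$. Your extra attention to the boundary case $j=h$ --- where reading $\dim\Delta(\ell,h)=h-1$ topologically would make $\min(j,\dim\Delta)=h-1$ and apparently undercount by $\binom{\ell}{h}$ --- is a useful clarification of a point the paper leaves implicit, and your direct argument that the square upper-triangular generator matrix is invertible (so the code is all of $\FF_2^n$, consistent with the paper's MDS remark) correctly settles it.
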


The main result of this paper is the calculation of the minimum distance of these codes for $j=1$.

\begin{theorem}\label{main} The minimum distance of the codes $C(\A_{\Delta(\ell,h)},1)=K(\ell,h,1)$ is $$\sum\limits_{a=1}^{h}\binom{\ell -1}{a-1}.$$\end{theorem}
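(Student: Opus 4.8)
The plan is to work directly with the generator matrix described in the proof of Proposition \ref{dimbsc}. For $j=1$, a codeword of $K(\ell,h,1)$ is the evaluation vector $ev_{\A}(f)$ of an affine-linear polynomial $f = c_0 + c_1 x_1 + \cdots + c_\ell x_\ell$ over $\FF_2$, evaluated at all points of $\FF_2^\ell$ with at most $h$ nonzero coordinates. The weight of this codeword is the number of such points $p$ with $f(p) = 1$. Since over $\FF_2$ we have $f(p) = c_0 + \sum_{i : p_i = 1} c_i$, the value $f(p)$ depends only on $c_0$ and on the parity of $|\,\mathrm{supp}(p) \cap \mathrm{supp}(c)\,|$, where $c = (c_1,\dots,c_\ell)$. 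So the first step is to reduce to a counting problem: given a fixed subset $A = \mathrm{supp}(c) \subseteq [\ell]$ with $|A| = m$ (and $1 \le m \le \ell$, since $c = 0$ gives either the zero codeword or the all-ones vector of weight $n$), count the number of subsets $B \subseteq [\ell]$ with $|B| \le h$ and $|B \cap A|$ of a prescribed parity, then minimize over $m$ and over the choice of $c_0 \in \{0,1\}$.

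Next I would set up the exact count. Writing $|B \cap A| = i$ and $|B \setminus A| = k$, the number of $B$ with these statistics is $\binom{m}{i}\binom{\ell - m}{k}$, subject to $i + k \le h$. The weight of the codeword (for $c_0 = 0$) is $\sum_{i \text{ odd}} \sum_{k} \binom{m}{i}\binom{\ell-m}{k}$ with the constraint $i+k \le h$; for $c_0 = 1$ it is the complementary count with $i$ even, including $i = 0$. The key algebraic step is to evaluate these restricted sums. I expect the cleanest route is a generating-function / finite-difference identity: the unconstrained version $\sum_{i \text{ odd}}\binom{m}{i} = 2^{m-1}$ is classical, and imposing the truncation $i + k \le h$ is exactly the kind of partial-sum identity that telescopes. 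The claimed answer $\sum_{a=1}^{h}\binom{\ell-1}{a-1} = \sum_{b=0}^{h-1}\binom{\ell-1}{b}$ strongly suggests that the minimum is attained at $m = 1$ (a single variable $x_i$, say $c_0 = 0$): then a point $p$ with $|\mathrm{supp}(p)| \le h$ satisfies $x_i(p) = 1$ iff $i \in \mathrm{supp}(p)$, and the number of such $p$ is the number of subsets of $[\ell]$ containing $i$ of size at most $h$, namely $\sum_{a=1}^{h}\binom{\ell-1}{a-1}$, matching the theorem exactly.

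The substance of the argument is therefore the lower bound: showing that no choice of $m \ge 2$, and no choice of $c_0$, produces a strictly smaller weight. For $c_0 = 0$ and general $m$, I would show the weight $\sum_{\substack{i+k \le h \\ i \text{ odd}}}\binom{m}{i}\binom{\ell-m}{k}$ is a nondecreasing function of $m$ for $m \ge 1$ (at $m=1$ only the term $i=1$ survives, giving $\sum_{k \le h-1}\binom{\ell-1}{k}$, the claimed value); a convexity/pairing argument comparing the $m$ and $m+1$ cases via Pascal's rule $\binom{m+1}{i} = \binom{m}{i} + \binom{m}{i-1}$ should do it, since adding an element to $A$ moves mass between the odd and even classes in a way that, under the truncation, never decreases the odd-class total. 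For the $c_0 = 1$ case one checks that the even-class count (which includes $i=0$, contributing the full ball $\sum_{k\le h}\binom{\ell-1}{k}$ already) is always at least the $m=1$, $c_0=0$ value. The main obstacle I anticipate is handling the truncation $i + k \le h$ cleanly in the monotonicity-in-$m$ step: without the truncation everything is an elementary identity, but the cutoff couples $i$ and $k$ and breaks the clean product structure, so I would likely need either a careful double-counting (injecting the codewords for parameter $m$ into those for $m+1$) or an induction on $h$ using the recursion that relates the size-$\le h$ ball to the size-$\le (h-1)$ ball. I would also need the small edge cases $h = \ell$ (where every point of $\FF_2^\ell$ is included and the code is the full Reed–Muller code $\R(1,\ell)$, of minimum distance $2^{\ell-1} = \sum_{a=1}^{\ell}\binom{\ell-1}{a-1}$) and $h = 0$ as consistency checks.
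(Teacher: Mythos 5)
Your reduction is sound and is essentially the same combinatorial setup as the paper's: a codeword of $K(\ell,h,1)$ is determined by $c_0$ and the support $A$ of the linear part, and its weight is the number of points of the truncated Boolean ball on which the affine form is $1$; your ``$m$'' is exactly the paper's number $s$ of rows of $RB_1$ being summed, and your $m=1$, $c_0=0$ computation reproduces the upper bound of Lemma \ref{1-row}. The problem is that the entire lower bound --- the actual content of the theorem --- is left as a plan, and the plan as stated does not work. You propose to show that the weight $W(m)=\sum_{i\ \mathrm{odd},\,i+k\le h}\binom{m}{i}\binom{\ell-m}{k}$ is nondecreasing in $m$; this is false. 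Already for $\ell=3$, $h=2$ (the $[7,4,3]$ code of Example \ref{[7,4,3]}) one has $W(1)=3$, $W(2)=4$, $W(3)=3$: the weight of $x_1+x_2+x_3$ drops back down to the minimum, so no pairing/injection argument can give monotone growth, and any correct argument must accommodate these non-monotone dips and cases of equality at $m>1$. What is true, and what must be proved, is only $W(m)\ge W(1)$ for all $m$, together with the analogous bound for $c_0=1$; your sketch of the $c_0=1$ case is also imprecise, since the $i=0$ term contributes $\sum_{k\le h}\binom{\ell-m}{k}$, not the full ball $\sum_{k\le h}\binom{\ell-1}{k}$, once $m>1$.

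This missing step is precisely where the paper spends its effort: it expresses $W(s)$ by a signed inclusion--exclusion with coefficients $(-2)^{t-1}$ (Lemma \ref{-2}), obtains the closed form $\sum_{a=1}^{h}\sum_{t=1}^{s}(-2)^{t-1}\binom{s}{t}\binom{\ell-t}{a-t}$ (Lemma \ref{s-rows}), and then proves $W(s)\ge W(1)$ through the binomial identity of Lemma \ref{tech} and a Pascal-expansion/telescoping argument in which the relevant coefficients $d^s_i$ alternate in sign and cancel in pairs. You correctly anticipate that the truncation $i+k\le h$ is the obstacle (without it everything is elementary), but you do not resolve it, and the specific resolution you suggest (monotonicity in $m$) is contradicted by small examples. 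So as it stands the proposal establishes the upper bound and the correct framework but has a genuine gap at the heart of the proof.
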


The proof of Theorem \ref{main} occupies the entirety of Section \ref{proof}. It is elementary using basic counting techniques. A shorter proof using generating functions might be possible. Section \ref{j>1} focuses on the minimum distance of the codes $K(\ell ,h,j)$ for $j>1$. Some bounds are given and a conjecture is given, but the minimum distance for $j>1$ is at this time out of hand for the authors. Section \ref{Hamming} investigates the relationship between $K(\ell,h,j)$ and Hamming codes. There it is shown that for certain values of $h$ and $j$ these codes are permutation equivalent.

\section{Proof of Theorem \ref{main}}\label{proof}

In order to obtain more information about $K(\ell,h,j)$ we need to examine and carefully construct a convenient generating matrix. To do this we need a little notation. Let $\sigma=\{i_1,\dots ,i_r\} \subseteq [\ell ]$ and let $x_\sigma =x_{i_1}x_{i_2}\cdots x_{i_r}$. With this notation, the Stanley-Reisner ideal of $\Delta (\ell ,h)$ is $$I_{\Delta (\ell ,h)}=(x_\sigma : |\sigma |=h+1).$$ To denote points in the arrangement $\A_{\Delta (\ell,h)}$ we let $\{e_1\dots ,e_\ell\}$ be the standard basis for $V=\FF_2^\ell$ (that is, $e_i$ has all components 0 except a 1 in the $i$-th coordinate). Now for $\tau=\{i_1,\dots ,i_s\} \subseteq [\ell]$ let $e_\tau=\sum\limits_{k=1}^se_{i_k}$. Then the set of all points in the arrangement $\A_{\Delta(\ell ,h)}$ is $$\left[ \bigcup\limits_{X\in \A_{\Delta(\ell,h)}}X\right]=\{e_\tau : 1\leq |\tau | \leq h\}.$$

Now we will construct the blocks of the generating matrix for $K(\ell ,h,j)$. Let $B_{rs}$ be the matrix defined as \begin{equation}B_{rs}=(x_\sigma (e_\tau))\end{equation} where $|\sigma|=r$, $|\tau |=s$, and the rows and columns are ordered degree lexicographically. Then a generating matrix $G(\ell ,h,j)$ of $K(\ell,h,j)$ constructed block-wise is $$G(\ell ,h,j)=(B_{rs})_{\substack{0\leq r\leq j\\ 0\leq s\leq h}}.$$ We can now denote column and row blocks of the generating matrix.

\begin{definition} Let $CB_t=\{B_{rt}: 0\leq r\leq h\}$ be the union of the blocks of columns in the matrix of the code with $t$ ones in each point. Let $RB_t=\{B_{tr}: 0\leq r\leq j\}$ be the union of the blocks of rows in the matrix of the code with $t$ ones in each point.\end{definition}

This notation for the generating matrix streamlines the computation of minimum distance. We begin by presenting an upper bound for the minimum distance.

\begin{lemma}\label{1-row} For $0\leq j\leq h\leq \ell$ the minimum distance of $K(\ell,h,j)$ satisfies $$d\leq \sum\limits_{i=0}^{h-j}{\ell -j \choose i}.
$$\end{lemma}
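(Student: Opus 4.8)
The plan is to exhibit an explicit nonzero codeword whose weight equals the claimed bound, thereby establishing it as an upper bound for the minimum distance. The natural candidate comes from the structure of the generating matrix $G(\ell,h,j)$ organized by the column blocks $CB_t$. A codeword is a linear combination (over $\FF_2$) of rows of $G$, i.e. the evaluation vector $ev_{\A_{\Delta(\ell,h)}}(f)$ of some polynomial $f \in S_{\leq j}$. I would choose $f = x_1 x_2 \cdots x_j$, the squarefree monomial of degree exactly $j$ supported on the first $j$ coordinates; this is a legitimate row of $G$ since $\{1,\dots,j\}$ is a face of $\Delta(\ell,h)$ (as $j \leq h$), so the corresponding monomial is nonzero in $S/I_{\Delta(\ell,h)}$. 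The resulting codeword is the row $x_\sigma$ with $\sigma = \{1,\dots,j\}$.

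The key computation is then just counting the support of this row: $x_\sigma(e_\tau) = 1$ exactly when $\sigma \subseteq \tau$, and $0$ otherwise (since $e_\tau$ has $1$'s precisely in the coordinates indexed by $\tau$, and a squarefree monomial evaluates to the product of those coordinates). So I need to count the points $e_\tau$ with $1 \leq |\tau| \leq h$ and $\{1,\dots,j\} \subseteq \tau$. Writing $\tau = \{1,\dots,j\} \cup \tau'$ with $\tau' \subseteq \{j+1,\dots,\ell\}$, the constraint $|\tau| \leq h$ becomes $|\tau'| \leq h - j$, and there are $\ell - j$ available indices for $\tau'$. Hence the number of such $\tau$ is $\sum_{i=0}^{h-j}\binom{\ell-j}{i}$, which is exactly the claimed weight. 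Since the minimum distance is at most the weight of any nonzero codeword, the lemma follows.

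The main thing to be careful about — rather than a genuine obstacle — is checking that the chosen monomial really does give a nonzero row of the generating matrix (it does, since $|\sigma| = j \leq h$ so $\sigma \in \Delta(\ell,h)$ and $x_\sigma \notin I_{\Delta(\ell,h)}$), and that the codeword is nonzero (it is, since taking $\tau' = \emptyset$ gives $\tau = \{1,\dots,j\}$ with $1 \leq |\tau| = j$, assuming $j \geq 1$; the case $j=0$ is trivial as the all-ones row has weight $n = \sum_{i=0}^h \binom{\ell}{i}$ matching the formula). No cancellation issues arise because I am using a single row, not a combination. This gives the upper bound cleanly; the matching lower bound, which is the substantive content of Theorem \ref{main} for $j=1$, is handled separately in the remainder of Section \ref{proof}.
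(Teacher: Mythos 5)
Your proposal is correct and is essentially the paper's own argument: the paper likewise takes a single row of the bottom block $RB_j$ (a degree-$j$ squarefree monomial) and counts the points $e_\tau$ with $\sigma\subseteq\tau$, $|\tau|\le h$, getting $\sum_{i=j}^{h}\binom{\ell-j}{i-j}=\sum_{i=0}^{h-j}\binom{\ell-j}{i}$. Your added checks that the row is a genuine nonzero codeword (and the $j=0$ case) are fine but not a different method.
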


\begin{proof}

In the generating matrix $G(\ell, h,j)$ the rows in the last row block $RB_j$ have the smallest weight. The smallest $t$ such that $B_{jt}$ has no zero entries is when $t=j$. The Hamming weight of any row of $B_{jt}$ for $j\leq t\leq h$ is ${\ell -j \choose t-j}$. Hence, the Hamming weight of an entire row in $RB_j$ is $$\sum\limits_{i=j}^h{\ell -j \choose i-j}.$$ \end{proof}

\begin{remark} If $j=h$ then $K(\ell,h,j)$ is a maximum distance separable code but the minimum distance is 1 because the upper bound here is 1. \end{remark}

If $j=1$ then the minimum distance is bounded by $d\leq \sum\limits_{i=0}^{h-1}\binom{\ell-1}{i}$. The main result of this paper (Theorem \ref{main}) is that this upper bound is exactly the minimum distance for the case $j=1$. First, we obtain a formula for the Hamming weight of adding $s$ rows of the generating matrix. In order to develop this formula, we need a little more notation. Suppose $x_{i_1}, \dots , x_{i_s}$ are the degree one monomials that correspond to the $s$ rows we are to sum in $B_{1a}$. Let $P_a$ be the set of all points in $\FF_2^\ell$ that have exactly $a$ nonzero entries. Note that $P_a$ corresponds to the columns of $B_{1a}$. 

\begin{definition}\label{XL}

For $1\leq t\leq s$, let $X_r:=\{ p\in P_a: x_{i_r}(p)=1\}$. Let $\mathcal{L}^{a,s}_t$ be the set of all the sets of points that evaluate to 1 on at least $t$ degree one monomials, so $$\mathcal{L}^{a,s}_t=\{X_{k_1}\cap \cdots \cap X_{k_t}: \{k_1,\dots , k_t\}\subseteq \{i_1,\dots ,i_s\}\}.$$

\end{definition}

If we wanted to calculate the size of the union of the sets $X_{i_1}\cup \cdots \cup X_{i_s}$, then we could use a standard inclusion-exclusion formula $$|X_{i_1}\cup \cdots \cup X_{i_s}|=\sum\limits_{t=1}^s(-1)^{t+1}\sum\limits_{Y\in \mathcal{L}^{a,s}_t}|Y|.$$ However, we want to calculate the Hamming weight of the sum of these row vectors of which not all points will sum to 1. To do this we will create a generalized inclusion-exclusion formula. 

\begin{lemma}\label{-2}

The Hamming weight of adding $s$ row vectors of $B_{1a}$ of the code $C(\A_{\Delta(l,h)},1)$ is $$\sum\limits_{t=1}^s(-2)^{t-1}\sum\limits_{Y\in \mathcal{L}^{a,s}_t}|Y|.$$

\end{lemma}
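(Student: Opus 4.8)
The plan is to compute the Hamming weight of the sum $x_{i_1} + \cdots + x_{i_s}$ evaluated on the columns of $B_{1a}$ directly, by classifying each point $p \in P_a$ according to how many of the monomials $x_{i_1},\dots,x_{i_s}$ it satisfies. For $p \in P_a$, let $c(p) = |\{ r : x_{i_r}(p) = 1\}|$ be that count. Over $\FF_2$ the entry of the summed row at $p$ is $c(p) \bmod 2$, so $p$ contributes $1$ to the Hamming weight exactly when $c(p)$ is odd. Thus the weight equals $\#\{ p \in P_a : c(p) \text{ odd}\} = \sum_{p : c(p) \text{ odd}} 1$.

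Next I would organize this count by the value $c(p)$. For a fixed $k$ with $1 \le k \le s$, a point $p$ with $c(p) = k$ lies in exactly $\binom{k}{t}$ of the sets $X_{k_1}\cap\cdots\cap X_{k_t}$ appearing in $\mathcal{L}^{a,s}_t$, and in none of those for $t > k$. Hence, writing $N_k = \#\{p \in P_a : c(p) = k\}$, we get $\sum_{Y \in \mathcal{L}^{a,s}_t} |Y| = \sum_{k \ge t} \binom{k}{t} N_k$. The target Hamming weight is $\sum_{k \text{ odd}} N_k$, while the claimed formula is $\sum_{t=1}^s (-2)^{t-1} \sum_{k\ge t}\binom{k}{t} N_k = \sum_{k=1}^s N_k \left( \sum_{t=1}^{k} (-2)^{t-1}\binom{k}{t} \right)$. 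So the whole lemma reduces to the binomial identity
\[
\sum_{t=1}^{k} (-2)^{t-1}\binom{k}{t} \;=\; \frac{1 - (1-2)^k}{2} \;=\; \frac{1 - (-1)^k}{2},
\]
which is $1$ when $k$ is odd and $0$ when $k$ is even — exactly the indicator of $k$ being odd. This matches $\sum_{k \text{ odd}} N_k$ and completes the argument.

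The only genuinely delicate point is the swap of summation order and the combinatorial bookkeeping in $\sum_{Y \in \mathcal{L}^{a,s}_t} |Y| = \sum_{k\ge t}\binom{k}{t}N_k$: one must be careful that $\mathcal{L}^{a,s}_t$ is indexed by $t$-subsets of $\{i_1,\dots,i_s\}$ (so there are $\binom{s}{t}$ of them, possibly with repeated or empty members as sets of points), and that a point satisfying exactly $k$ of the monomials is double-counted with multiplicity $\binom{k}{t}$ across these intersections. Once that identity is in hand, everything else is the elementary evaluation of $(1-2)^k$ via the binomial theorem. I would present the proof in exactly this order: reduce the weight to a parity count, re-expand both sides in terms of the $N_k$, and finish with the binomial identity.
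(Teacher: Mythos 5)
Your proof is correct. It rests on the same underlying decomposition as the paper's --- classify each point $p\in P_a$ by the number $k=c(p)$ of the chosen monomials that evaluate to $1$ at $p$, and observe that $p$ contributes to the weight exactly when $k$ is odd --- but your execution differs: you verify the formula directly by swapping the order of summation, writing $\sum_{Y\in\mathcal{L}^{a,s}_t}|Y|=\sum_{k\ge t}\binom{k}{t}N_k$ and reducing everything to the single binomial identity $\sum_{t=1}^{k}(-2)^{t-1}\binom{k}{t}=\frac{1-(-1)^k}{2}$, whereas the paper determines the coefficients $c_t$ synthetically, setting up the recursion $\sum_{r=1}^{t}\binom{t}{r}c_r=\frac{(-1)^t-1}{-2}$ and proving $c_t=(-2)^{t-1}$ by induction on $t$ (using the same expansion of $(1-2)^{t+1}$ inside the induction step). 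Your route is shorter and avoids the induction entirely, and it has the additional merit of making explicit the bookkeeping point that the paper glosses over: $\mathcal{L}^{a,s}_t$ must be read as indexed by the $\binom{s}{t}$ subsets $\{k_1,\dots,k_t\}$ (a multiset of intersections, possibly with repeated or empty members), which is exactly how the paper later uses it in Lemma \ref{s-rows} when it sets $|\mathcal{L}^{a,s}_t|=\binom{s}{t}$. The paper's inductive formulation, on the other hand, generalizes verbatim to the higher-degree setting of Proposition \ref{size}, where the same coefficients $(-2)^{e-1}$ reappear; either argument would serve there equally well.
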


\begin{proof}

We prove this by induction. The critical idea here is that if a point $p$ is contained in exactly $t$ sets $X_{k_1},\dots, X_{k_t}$ and not in any others, then the entry corresponding to this point in the sum will be 0 if $t$ is even and 1 if $t$ is odd. Let $c_t$ be the coefficient that will be multiplied to the point $p$ that is contained in exactly $t$ sets $X_{k_1},\dots, X_{k_t}$ in the sum (note that points are the objects being summed here because the $Y$s consist of points).  In the case when $t=1$, we want to count all the points that are in exactly 1 set. We therefore sum the entirety of the sets of just one intersection: $\sum\limits_{r=1}^s|X_{i_r}|$. Thus, the coefficient is $c_1=1$ for the $t=1$ term. However, if $t>1$, the point $p$ has already been counted in lower terms because it is also a subset of  all possible intersections of these $t$ sets: $$X_{k_1},\dots ,X_{k_t},X_{k_1}\cap X_{k_2},\dots ,X_{k_{t-1}}\cap X_{k_t},\dots ,X_{k_1}\cap \cdots \cap X_{k_{t-1}}, \dots ,X_{k_2}\cap \cdots X_{k_t}.$$ Because we want the coefficient for $t$ odd to be 1 and for $t$ even to be zero, we now have that  $$\sum\limits_{r=1}^t{t\choose r}c_r=\left\{ \begin{array}{ccc} 1 & \ \ & t \text{ odd}\\ 0 & \ \ & t \text{ even } \\ \end{array} \right. .$$ One method to do this is to set $$\sum\limits_{r=1}^t{t\choose r}c_r=\frac{(-1)^t-1}{-2}.$$ Now we prove by induction on $t$ that $c_t=(-2)^{t-1}$. The base is already provided above. 

By construction, $c_{t+1}=\frac{(-1)^{t+1}-1}{-2}-\sum\limits_{r=1}^{t}\binom{t+1}{r} c_r$. Then by the induction hypothesis, $$c_{t+1}=\frac{(-1)^{t+1}-1}{-2}-\sum\limits_{r=1}^{t}\binom{t+1}{r} (-2)^{r-1}.$$  Using the binomial expansion formula, we see that that $$(-1)^{t+1}=(-2+1)^{t+1}=\sum\limits_{i=0}^{t+1} \binom{t+1}{i} (-2)^i 1^{t+1-i}=\sum\limits_{i=0}^{t+1} \binom{t+1}{i} (-2)^i $$ $$= 1+\sum\limits_{i=1}^{t+1} \binom{t+1}{i} (-2)^i=1+(-2)\sum\limits_{i=1}^{t+1} \binom{t+1}{i} (-2)^{i-1}.$$ Hence, $$\frac{(-1)^{t+1}-1}{-2}=\sum\limits_{i=1}^{t+1} \binom{t+1}{i} (-2)^{i-1} = (-2)^{t} + \sum\limits_{i=1}^{t} \binom{t+1}{i} (-2)^{i-1}.$$ Now add the sum to both sides of this equation to obtain  $$(-2)^{t} = \frac{(-1)^{t+1}-1}{-2}-\sum\limits_{i=1}^{t} \binom{t+1}{i} (-2)^{i-1}=c_t.$$ \end{proof}

Lemma \ref{-2} gives a nice method to compute the Hamming weight of the sum of $s$ row vectors.

\begin{lemma}\label{s-rows}The Hamming weight of adding $s$ vectors in $RB_1$ is $$\sum\limits_{a=1}^{h}\sum\limits_{t=1}^s(-2)^{t-1}\binom{s}{t}\binom{\ell-t}{a-t}.$$\end{lemma}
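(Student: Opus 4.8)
The plan is to combine Lemma~\ref{-2} with a direct count of the sizes of the sets in $\mathcal{L}^{a,s}_t$, summing the per-block contribution over all column blocks $CB_a$ with $1\leq a\leq h$. Fix the $s$ degree-one monomials $x_{i_1},\dots,x_{i_s}$ corresponding to the $s$ rows of $RB_1$ being added; since all these rows lie in $B_{1a}$ as $a$ ranges over $1,\dots,h$, the total Hamming weight is $\sum_{a=1}^h w_a$, where $w_a$ is the weight of the sum restricted to the columns $P_a$. By Lemma~\ref{-2}, $w_a=\sum_{t=1}^s(-2)^{t-1}\sum_{Y\in\mathcal{L}^{a,s}_t}|Y|$, so the result follows once I show that for every $t$-subset $\{k_1,\dots,k_t\}\subseteq\{i_1,\dots,i_s\}$ the intersection $X_{k_1}\cap\cdots\cap X_{k_t}$ has size exactly $\binom{\ell-t}{a-t}$, and then observe there are $\binom{s}{t}$ such subsets.

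The key step is thus the cardinality count: $X_{k_1}\cap\cdots\cap X_{k_t}$ consists of those $p\in P_a$ (points with exactly $a$ nonzero coordinates) whose coordinates indexed by $k_1,\dots,k_t$ are all $1$. Such a point is determined by choosing which of the remaining $\ell-t$ coordinates carry the other $a-t$ ones, giving $\binom{\ell-t}{a-t}$ choices; when $a<t$ this binomial is zero, which is exactly right since no point in $P_a$ can have $t>a$ prescribed ones. I would state this as a short sub-observation, perhaps inline. Plugging $|Y|=\binom{\ell-t}{a-t}$ (independent of which $t$-subset $Y$ comes from) into the inner sum collapses $\sum_{Y\in\mathcal{L}^{a,s}_t}|Y|$ to $\binom{s}{t}\binom{\ell-t}{a-t}$, yielding $w_a=\sum_{t=1}^s(-2)^{t-1}\binom{s}{t}\binom{\ell-t}{a-t}$, and summing over $a$ from $1$ to $h$ gives the claimed formula.

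There is one bookkeeping subtlety worth flagging: the statement concerns vectors in $RB_1$, which is the union of the row blocks $B_{1r}$, so a single ``row of $RB_1$'' spans all column blocks $B_{1a}$ simultaneously; I need to make clear that the sets $X_r$ in Definition~\ref{XL} are defined block-by-block (for fixed $a$) and that the same index set $\{i_1,\dots,i_s\}$ governs every block, so that the inclusion-exclusion coefficients from Lemma~\ref{-2} are uniform across blocks and the weights simply add. I should also note that the $a=0$ block $B_{10}$ is the single zero column (the origin evaluates every degree-one monomial to $0$), so it contributes nothing and it is harmless that the sum starts at $a=1$.

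I do not expect any real obstacle here — the only thing to get right is the indexing/ordering of blocks and confirming that Lemma~\ref{-2} applies verbatim within each block $B_{1a}$. The combinatorial core (the $\binom{\ell-t}{a-t}$ count and the $\binom{s}{t}$ multiplicity) is immediate, so the proof should be three or four sentences once the setup is pinned down.
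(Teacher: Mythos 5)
Your proposal is correct and follows essentially the same route as the paper's proof: apply Lemma~\ref{-2} within each column block $CB_a$, count $|Y|=\binom{\ell-t}{a-t}$ by fixing the $t$ prescribed ones and choosing the remaining $a-t$ among the other $\ell-t$ coordinates, multiply by the $\binom{s}{t}$ choices of $t$-subsets, and sum over $1\leq a\leq h$. The extra bookkeeping you flag (uniformity across blocks and the trivial $a=0$ block) is harmless and consistent with the paper's argument.
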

\begin{proof} Note that for any $Y\in \mathcal{L}^{a,s}_t$, the size is $|Y|={ \ell -t \choose a-t}$ since $t$ of the nonzero entries must match up with the $t$ monomials (so there are $a-t$ ones left to chose from the remaining $\ell-t$ components of the point). Since we can choose any $t$ subsets of the monomials, we have $$|\mathcal{L}^{a,s}_t|={ s \choose t}.$$ Then the formula for the Hamming weight is given by applying Lemma \ref{-2} and summing over all possible column blocks $CB_a$ where $1\leq a\leq h$.\end{proof}

Next we prove a technical lemma that will be used in the proof of the main theorem.

\begin{lemma}\label{tech}

If $g_i^s=\sum\limits_{t=1}^i{s-t \choose i-t}{s \choose t}(-2)^{t-1}$, then $$g_i^s=\left\{ \begin{array}{cc} \binom{s}{i} & 2|i \\ 0 & 2 \nmid i\end{array}\right..$$

\end{lemma}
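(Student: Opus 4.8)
The statement to prove is the identity
$$g_i^s=\sum_{t=1}^i\binom{s-t}{i-t}\binom{s}{t}(-2)^{t-1}=\begin{cases}\binom{s}{i}&2\mid i\\0&2\nmid i.\end{cases}$$

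\textbf{Plan of attack.} The plan is to recognize $g_i^s$ as (essentially) a coefficient extraction from a product of two generating functions and evaluate it in closed form. First I would rewrite the summand using the ``subset of a subset'' identity $\binom{s-t}{i-t}\binom{s}{t}=\binom{s}{i}\binom{i}{t}$, which holds because both sides count pairs $T\subseteq U\subseteq[s]$ with $|T|=t$, $|U|=i$. This collapses the double dependence on $s$ into a single prefactor:
$$g_i^s=\binom{s}{i}\sum_{t=1}^i\binom{i}{t}(-2)^{t-1}=\frac{\binom{s}{i}}{-2}\sum_{t=1}^i\binom{i}{t}(-2)^{t}=\frac{\binom{s}{i}}{-2}\left(\sum_{t=0}^i\binom{i}{t}(-2)^{t}-1\right).$$
Then I would apply the binomial theorem, $\sum_{t=0}^i\binom{i}{t}(-2)^t=(1-2)^i=(-1)^i$, so the inner sum is $((-1)^i-1)/(-2)$, which equals $0$ when $i$ is even and $1$ when $i$ is odd. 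Hence $g_i^s=\binom{s}{i}\cdot\frac{(-1)^i-1}{-2}$, giving exactly $\binom{s}{i}$ for even $i$ (including $i=0$ if one adopts the empty-sum convention, though here the sum starts at $t=1$ so one should note $i\ge 1$, or simply that $g_0^s=0$ vacuously as an empty sum while $\binom{s}{0}=1$—so the statement is really intended for $i\ge1$) and $0$ for odd $i$. I would state the $i\ge 1$ restriction explicitly or check the edge case, since the claimed value $\binom{s}{i}$ at $i=0$ would require the empty sum to equal $1$, not $0$.

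\textbf{Key steps in order.} (1) Prove the trinomial revision identity $\binom{s-t}{i-t}\binom{s}{t}=\binom{s}{i}\binom{i}{t}$ by a direct factorial manipulation or a double-counting argument. (2) Factor $\binom{s}{i}$ out of the sum. (3) Reindex to include $t=0$, accounting for the $-1$ correction and the $(-2)^{t-1}$ versus $(-2)^t$ discrepancy with an overall factor of $-1/2$. (4) Apply the binomial theorem with base $(1-2)$. (5) Read off the parity dichotomy from $((-1)^i-1)/(-2)$.

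\textbf{Main obstacle.} There is essentially no serious obstacle here; the only thing to be careful about is bookkeeping of the constants (the $(-2)^{t-1}$ shift and the empty-term correction when extending the summation range), and the mild edge-case mismatch at $i=0$ between the empty sum and the formula $\binom{s}{i}$. Everything else is the standard identity plus the binomial theorem. If one preferred to avoid the trinomial revision identity, an alternative is a generating-function argument: $g_i^s$ is $[z^i]$ in the product $\big(\sum_{t\ge 1}\binom{s}{t}(-2)^{t-1}z^t\big)\cdot\big(\sum_{m\ge0}\binom{s-t}{m}z^m\big)$ after suitable reindexing, but the trinomial route is cleaner and I would use that.
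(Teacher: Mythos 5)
Your reduction is in substance the same as the paper's argument, just streamlined: the factorial manipulation the paper carries out in its two cases is precisely your trinomial revision $\binom{s-t}{i-t}\binom{s}{t}=\binom{s}{i}\binom{i}{t}$, followed by extending the sum to $t=0$ and applying the binomial theorem with base $1-2$. The computation itself is correct up to the last sentence, and that is where the gap is: from $g_i^s=\binom{s}{i}\cdot\frac{(-1)^i-1}{-2}$ you conclude ``exactly $\binom{s}{i}$ for even $i$ and $0$ for odd $i$,'' which contradicts the sentence immediately preceding it, where you correctly observe that $\frac{(-1)^i-1}{-2}$ equals $0$ for even $i$ and $1$ for odd $i$. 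Your own formula therefore yields $g_i^s=0$ when $2\mid i$ and $g_i^s=\binom{s}{i}$ when $2\nmid i$, i.e.\ the parities opposite to those displayed in the lemma. A direct check confirms this: for $s=i=2$, $g_2^2=\binom{1}{1}\binom{2}{1}-2\binom{0}{0}\binom{2}{2}=2-2=0\neq\binom{2}{2}$, while $g_1^2=\binom{1}{0}\binom{2}{1}=2=\binom{2}{1}$. So as written you derive (correctly) the parity-swapped identity and then assert the stated one anyway; the silent flip in the final read-off is the error you need to fix, and you cannot fix it by redoing the algebra, because the statement as displayed is the false version.

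For comparison, the paper's own proof lands where your computation does: its Case 1 ($i=2m$) ends with $g_{2m}^s=0$ and its Case 2 ($i=2m+1$) ends with $\binom{s}{2m+1}$ (modulo a typo rendering it $\binom{s}{2m}$), so the lemma's two cases appear to be interchanged in the statement, and it is the swapped version that is true and that both you and the paper actually establish. Your caveat about $i=0$ (empty sum versus $\binom{s}{0}=1$) is correct but moot, since the lemma is only invoked for $i\geq 1$; the substantive point to flag is the parity mismatch, which also propagates to how $g_{2m}^s$ and $g_{2m+1}^s$ are used later in the proof of the main theorem.
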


\begin{proof}

We prove this in two cases. Case 1 is when $i=2m$. Then $$g_{2m}^s=\sum\limits_{t=1}^{2m}\binom{s-t}{2m-t}\binom{s}{t}(-2)^{t-1}=\sum\limits_{t=1}^{2m} \frac{(s-t)!s!}{(2m-t)!(s-2m)!t!(s-t)!} (-2)^{t-1}$$$$=\sum\limits_{t=1}^{2m} \frac{s!}{(2m-t)!(s-2m)!t!}\frac{(2m)!}{(2m)!} (-2)^{t-1}=\frac{s!}{(s-2m)!(2m)!}\sum\limits_{t=1}^{2m} \frac{(2m)!}{(2m-t)!t!} \left(-2\right)^{t-1}$$$$=\binom{s}{2m}\left(-\frac{1}{2}\right)\left[\sum\limits_{t=1}^{2m}\binom{2m}{t}(-2)^t\right]=\binom{s}{2m}\left(-\frac{1}{2}\right)\left[\sum\limits_{t=0}^{2m}\binom{2m}{t}(-2)^t-1\right]$$$$=\binom{s}{2m}\left(-\frac{1}{2}\right)\left[\sum\limits_{t=0}^{2m}\binom{2m}{t}(-2)^t(1)^{2m-t}-1\right]=\binom{s}{2m}\left(-\frac{1}{2}\right)\left[ (1-2)^{2m}-1\right]$$$$=\binom{s}{2m}\left(-\frac{1}{2}\right) (1-1)=0.$$

Case 2 is when $i=2m+1$. Then $$g_{2m+1}^s=\sum\limits_{t=1}^{2m+1}\binom{s-t}{2m+1-t}\binom{s}{t}(-2)^{t-1}$$$$=\sum\limits_{t=1}^{2m+1} \frac{(s-t)!s!}{(2m+1-t)!(s-2m-1)!t!(s-t)!} (-2)^{t-1}$$$$=\sum\limits_{t=1}^{2m+1} \frac{s!}{(2m+1-t)!(s-2m-1)!t!}\frac{(2m+1)!}{(2m+1)!} (-2)^{t-1}$$$$=\frac{s!}{(s-2m-1)!(2m+1)!}\sum\limits_{t=1}^{2m+1} \frac{(2m+1)!}{(2m+1-t)!t!} (-2)^{t-1}$$$$=\binom{s}{2m+1}\left(-\frac{1}{2}\right)\left[\sum\limits_{t=1}^{2m+1}\binom{2m+1}{t}(-2)^t\right]$$$$=\binom{s}{2m+1}\left(-\frac{1}{2}\right)\left[\sum\limits_{t=0}^{2m+1}\binom{2m+1}{t}(-2)^t-1\right]$$$$=\binom{s}{2m+1}\left(-\frac{1}{2}\right)\left[\sum\limits_{t=0}^{2m+1}\binom{2m+1}{t}(-2)^t(1)^{2m+1-t}-1\right]$$$$=\binom{s}{2m+1}\left(-\frac{1}{2}\right)\left[(1-2)^{2m+1}-1\right]=\binom{s}{2m}\left(-\frac{1}{2}\right) (-1-1)$$$$=\binom{s}{2m}\left(-\frac{1}{2}\right) (-2)=\binom{s}{2m}.$$\end{proof}

Notice that the formula in Lemma \ref{s-rows} for the case $s=1$ is exactly the computation made in Lemma \ref{1-row}. In order to show that this value for $s=1$ is exactly the minimum distance, it is enough to show that the sum for $s>1$ is greater than that for $s=1$, since $q=2$. 

\begin{proof}[Proof of Theorem \ref{main}]  We need to show that the Hamming weight of adding $s$ rows given in Lemma \ref{s-rows} is always larger than the Hamming weight of one row given in Lemma \ref{1-row}: $$\sum\limits_{a=1}^{h}\sum\limits_{t=1}^s(-2)^{t-1}\binom{s}{t}\binom{\ell-t}{a-t} \geq \sum\limits_{a=1}^{h}{\ell -1 \choose a-1}$$ This is equivalent to showing \begin{equation}\label{dude}\sum\limits_{a=1}^{h}\left[ \left(\sum\limits_{t=1}^s(-2)^{t-1}\binom{s}{t}\binom{\ell-t}{a-t}\right) -{\ell -1 \choose a-1}\right]\geq 0.\end{equation} Now we use Pascal's formula to allow for the exchange of terms of \ref{dude}. We examine the term $${\ell -t \choose a-t}=\binom{\ell-t-1}{a-t}+\binom{\ell-t-1}{a-t-1}$$ $$=\left(\binom{\ell-t-2}{a-t}+\binom{\ell-t-2}{a-t-1}\right)+\left( \binom{\ell-t-2}{a-t-1}+\binom{\ell-t-3}{a-t-2}\right) =\dots$$  Since, whenever Pascal's formula is used, each binomial coefficient is broken down into two binomial coefficients, the process is analogous to Pascal's triangle: the top number, $l-t-x$, corresponds to the $x$th row, and the bottom number, $a-t-x$, corresponds to the $x$th column.  Thus, there are $\binom{s-t}{i-t}$ occurrences of each $\binom{\ell-s}{a-i}$ for each $t$.  Therefore, $\binom{\ell-t}{a-t}=\sum\limits_{i=1}^s \binom{\ell-s}{a-i}\binom{s-t}{i-t}$, so the inequality we are trying to prove is now \begin{equation}\label{expanded}\sum\limits_{a=1}^{h} \left[\left(\sum\limits_{t=1}^s (-2)^{t-1}\binom{s}{t}\sum\limits_{i=1}^s \binom{\ell-s}{a-i}\binom{s-t}{i-t}\right)-\sum\limits_{i=1}^s \binom{\ell-s}{a-i}\binom{s-1}{i-1}\right] \geq 0.\end{equation} Now focusing on the ${\ell -s \choose a-s}$ terms, \ref{expanded} becomes \begin{equation}\label{terms} \sum\limits_{a=1}^h \left[ \sum\limits_{i=1}^s \left(\left( \sum\limits_{t=1}^s (-2)^{t-1}\binom{s}{t}{s-t \choose i-t}\right) -{s-1\choose i-1}\right) {\ell -s\choose a-i}\right] \geq 0\end{equation} Notice that the third sum is only nonzero when $t\leq i$ and that the ${s-1\choose i-1}$ term only affects the $t=1$ term of the third sum. Hence, we can rewrite \ref{terms} as \begin{equation}\label{factored}  \sum\limits_{a=1}^h \left[ \sum\limits_{i=1}^s \left( (s-1){s-1\choose i-1}+\sum\limits_{t=2}^i (-2)^{t-1}\binom{s}{t}{s-t \choose i-t}\right) {\ell -s\choose a-i}\right] \geq 0\end{equation} Let $d^s_i $ be the coefficient of ${\ell -s\choose a-i}$ in \ref{factored}: $$d_i^s=(s-1){s-1\choose i-1}+\sum\limits_{t=2}^i (-2)^{t-1}\binom{s}{t}{s-t \choose i-t}.$$ Recall the numbers $g_i^s$ from Lemma \ref{tech}: $$g_i^s=\sum\limits_{t=1}^i{s-t \choose i-t}{s \choose t}(-2)^{t-1}.$$ Then $$g_i^s-d_i^s=\sum\limits_{t=1}^i{s-t \choose i-t}{s \choose t}(-2)^{t-1}-(s-1){s-1\choose i-1}-\sum\limits_{t=2}^i (-2)^{t-1}\binom{s}{t}{s-t \choose i-t}$$ $$={s-1\choose i-1}.$$ Thus, \begin{equation}\label{switch} d_{2m+1}^s=g_{2m+1}^s-{s-1\choose 2m},\end{equation} which, by Lemma \ref{tech}, gives that \ref{switch} becomes $$d_{2m+1}^s=-{s-1\choose 2m}.$$ Using Lemma \ref{switch} again, we get $$d_{2m}^s=g_{2m}^s-{s-1\choose 2m-1}={s \choose 2m}-{s-1 \choose 2m-1}={s-1\choose 2m}.$$ Hence, \begin{equation}\label{tele}d_{2m+1}^s=-d_{2m}^s.\end{equation}

The main inequality we are trying to prove, \ref{factored}, can now be written as \begin{equation}\label{d's} \sum\limits_{a=1}^h\left[ \sum\limits_{i=1}^sd_i^s{\ell -s \choose a-i}\right] \geq 0.\end{equation} Assume $s=2m$ is even and expand the left hand side of \ref{d's} via odds and evens: $$\sum\limits_{a=1}^h\left[ \left( \sum\limits_{r=1}^{m}d_{2r}^{2m}{\ell -2m \choose a-2r}\right) +\left(\sum\limits_{r=0}^{m-1}d_{2r+1}^{2m}{\ell -2m\choose a-2r-1}\right)\right] .$$ Then using \ref{d's} on the odd terms, we get \begin{equation}\label{tele2} \sum\limits_{a=1}^h\left[ \left( \sum\limits_{r=1}^{m}d_{2r}^{2m}{\ell -2m \choose a-2r}\right) -\left(\sum\limits_{r=0}^{m-1}d_{2r}^{2m}{\ell -2m\choose a-2r-1}\right)\right] \end{equation} Then using Pascal's formula on \ref{tele2}, we have \begin{equation}\label{tele3} \sum\limits_{a=1}^h\left[ d_{2m}^{2m}{\ell -2m\choose a-2m}-d_0^{2m}{\ell -2m\choose a-1}+\sum\limits_{r=1}^{m-1}d_{2r}^{2m}\left( {\ell -2m \choose a-2r}-{\ell -2m\choose a-2r-1}\right)\right] .\end{equation} Then switch sums on \ref{tele3} to get \begin{equation}\label{tele4} d_{2m}^{2m}\sum\limits_{a=1}^h{\ell -2m\choose a-2m}-d_0^{2m}\sum\limits_{a=1}^h{\ell -2m\choose a-1}+\left[\sum\limits_{r=1}^{m-1}d_{2r}^{2m}\sum\limits_{a=1}^h\left( {\ell -2m \choose a-2r}-{\ell -2m\choose a-2r-1}\right)\right] .\end{equation} Then the sum in the left portion of \ref{tele4} telescopes: \begin{equation}\label{telescope} d_{2m}^{2m}\sum\limits_{a=1}^h{\ell -2m\choose a-2m}-d_0^{2m}\sum\limits_{a=1}^h{\ell -2m\choose a-1}+\left[\sum\limits_{r=1}^{m-1}d_{2r}^{2m}\left(-{\ell -2m \choose -2r}+{\ell -2m\choose h-2r}\right)\right] .\end{equation} Then notice that $d_0^{2m}=0$ and that \ref{telescope} becomes \begin{equation}\label{BERG} d_{2m}^{2m}\sum\limits_{a=1}^h{\ell -2m\choose a-2m}+\left[\sum\limits_{r=1}^{m-1}d_{2r}^{2m}{\ell -2m\choose h-2r}\right].\end{equation}
Since \ref{BERG} is the left hand side of \ref{factored} and each term is positive, we have proved the theorem.\end{proof}

\begin{example}
 $\Delta(5,2)$consists of 5 vertices and all 1-dimensional and 0-dimensional faces (which are the edges and vertices, respectively).  This simplicial complex is graphically represented in Figure \ref{15simplex}.  The matrix generating $K(5,2,1)$ is as follows:
$$\left[\begin{array}{cccccccccccccccc}
1&1 &1 &  1&1&1&1&1&1&1&1&1&1&1&1&1\\
0&1 &0 &  0&0&0&1&1&1&1&0&0&0&0&0&0\\
0&0 &1 &  0&0&0&1&0&0&0&1&1&1&0&0&0\\
0&0 &0 &  1&0&0&0&1&0&0&1&0&0&1&1&0\\
0&0 &0 &  0&1&0&0&0&1&0&0&1&0&1&0&1\\
0&0 &0 &  0&0&1&0&0&0&1&0&0&1&0&1&1\\ 
\end{array}\right].$$  Thus, by the formulas given in  Equation \ref{k-length}, Corollary \ref{k-dimension}, and Theorem \ref{main}, $K(5,2,1)$ is a $[\sum\limits_{i=0}^h{\ell \choose i},\sum\limits_{i=0}^j{\ell \choose i}, \sum\limits_{a=1}^h \binom{\ell-1}{a-1}]_2=[\sum\limits_{i=0}^2{5 \choose i},\sum\limits_{i=0}^1{5 \choose i}, \sum\limits_{a=1}^2 \binom{5-1}{a-1}]_2=[16,6,5]_2$ code.
\end{example}

\begin{figure}
 \includegraphics[height=2in]{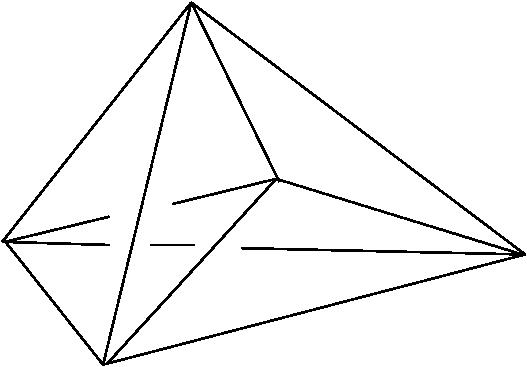}\caption{$\Delta(5,2)$}\label{15simplex}
\end{figure}

\section{The minimum distance of $K(\ell ,h,j)$ for $j>1$}\label{j>1}
Now we focus on the case when $j>1$. It is more complicated, and we are not able to calculate the minimum distance.  However, we are able to find formulas for summing row vectors of the generating matrix and and are able to compare these formulas to the conjectured upper bound. 

\begin{definition} Let $B(\sigma_1,\dots,\sigma_s)$ be the Hamming weight of adding $s$ rows of the generating matrix $G(\ell ,h,j)$ where each row corresponds to a set $\sigma_i\subseteq [\ell]=\{1,\dots,\ell \}$.\end{definition} The $j>1$ analogue to Definition \ref{XL} is the following.

\begin{definition}
 
For $1\leq t\leq s$ let $X_{\sigma_r}:=\{ p\in P_a: x_{\sigma_r}(p)=1\}$. Let $\mathcal{L}^{a,s}_t$ be the set of all the sets of points that evaluate to 1 on at least $t$ of the $s$ monomials $x_{\sigma_1},\dots,x_{\sigma_s}$. Thus, $$\mathcal{L}^{a,s}_t=\{X_{k_1}\cap \cdots \cap X_{k_t}: \{k_1,\dots , k_t\}\subseteq \{\sigma_1,\dots ,\sigma_s\}\}.$$\end{definition}

\begin{proposition}\label{size} For any skeletal code $K(l,h,j)$, the Hamming weight of adding $n$ rows of the generating matrix is $$B(\sigma_1,\dots,\sigma_n)= \sum\limits_{e=1}^n \left[(-2)^{e-1}\sum\limits_{\substack{I\subseteq[n]\\ \abs{I}=e}}B\left(\bigcup\limits_{i\in I}\sigma_i\right)\right] =\sum\limits_{e=1}^n \left[ (-2)^{e-1}\sum\limits_{\substack{I\subseteq[n]\\ \abs{I}=e}}\sum\limits_{i=0}^{h-\abs{\cup \sigma_i}}\binom{\ell-\abs{\cup \sigma_i}}{i}\right].$$\end{proposition}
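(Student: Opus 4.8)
The plan is to prove Proposition \ref{size} as a direct generalization of Lemma \ref{-2} and Lemma \ref{s-rows}, since nothing in those arguments actually used that the monomials had degree one. First I would set up the combinatorial skeleton exactly as in the $j=1$ case: fix the $s$ rows corresponding to monomials $x_{\sigma_1},\dots,x_{\sigma_s}$, and for each column block $CB_a$ consider the sets $X_{\sigma_r}=\{p\in P_a : x_{\sigma_r}(p)=1\}$. The entry of the summed row at a point $p$ is $1$ iff $p$ lies in an odd number of the $X_{\sigma_r}$, exactly as before, because we are still working over $\FF_2$ and each individual matrix entry is still $0$ or $1$. So the coefficient bookkeeping from Lemma \ref{-2} applies verbatim: a point lying in exactly $t$ of the sets gets counted with the generalized inclusion-exclusion weight $c_t=(-2)^{t-1}$, and we obtain
$$B(\sigma_1,\dots,\sigma_n)=\sum_{e=1}^n(-2)^{e-1}\sum_{Y\in\mathcal{L}^{a,n}_e}|Y|$$
summed over all column blocks $a=1,\dots,h$. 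I would simply cite Lemma \ref{-2} and remark that its proof is insensitive to the degree of the monomials involved.

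The next step is to identify $\sum_{Y\in\mathcal{L}^{a,n}_e}|Y|$ with the quantity appearing in the statement. Here is the one genuinely new observation compared to the $j=1$ case: an element of $\mathcal{L}^{a,n}_e$ indexed by a subset $I\subseteq[n]$ with $|I|=e$ is the set of points in $P_a$ on which all of $x_{\sigma_i}$, $i\in I$, evaluate to $1$; for a squarefree monomial (which is all we have, these being Stanley--Reisner generators), $\prod_{i\in I}x_{\sigma_i}$ evaluates to $1$ at $p$ iff every coordinate indexed by $\bigcup_{i\in I}\sigma_i$ is nonzero. Hence $|X_{\sigma_{i_1}}\cap\cdots\cap X_{\sigma_{i_e}}|$ depends only on the union $\bigcup_{i\in I}\sigma_i$, and counting the number of points in $P_a$ (at most $h$ nonzero entries, exactly $a$ of them) whose support contains a fixed set of size $|\cup\sigma_i|$ gives $\binom{\ell-|\cup\sigma_i|}{a-|\cup\sigma_i|}$. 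Summing over $a$ from $1$ to $h$ and reindexing $i=a-|\cup\sigma_i|$ collapses the $a$-sum to $\sum_{i=0}^{h-|\cup\sigma_i|}\binom{\ell-|\cup\sigma_i|}{i}$, which is precisely the length formula \eqref{k-length} with $\ell,h$ replaced by $\ell-|\cup\sigma_i|,\,h-|\cup\sigma_i|$; that is, it equals $B(\bigcup_{i\in I}\sigma_i)$ in the paper's notation where $B(\sigma)$ abusively denotes the number of points whose support contains $\sigma$. Assembling these two identifications yields both displayed equalities in the proposition.

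I would organize the write-up as: (1) reduce to a per-column-block count and invoke Lemma \ref{-2} (or re-run its short induction, noting no change is needed); (2) prove the key lemma $|X_{\sigma_{i_1}}\cap\cdots\cap X_{\sigma_{i_e}}\cap P_a| = \binom{\ell - |\cup_{k}\sigma_{i_k}|}{a - |\cup_k\sigma_{i_k}|}$ by the direct support-counting argument above, using squarefreeness of $x_\sigma$; (3) sum over $a$ and over $e$-subsets $I$ and recognize the result. The main obstacle, and really the only subtle point, is step (2): one must be careful that for squarefree monomials the product's vanishing is governed by the \emph{union} of the index sets rather than by multiplicities, and that the point set $P_a$ correctly restricts the count — but this is exactly the elementary counting already used implicitly in Lemma \ref{s-rows}, so it is not hard, merely a place where the degree-one hypothesis must be explicitly discarded. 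No convergence, positivity, or asymptotic issues arise; the proposition is an identity, so once the two counting steps are in place the proof is complete.
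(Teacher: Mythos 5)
Your proposal is correct and follows essentially the same route as the paper's proof: the $(-2)^{e-1}$ coefficients carry over verbatim from Lemma \ref{-2}, the $e$-fold intersection is identified with the (possibly virtual, i.e.\ $j=\ell$) row corresponding to the union monomial $x_{\cup_{i\in I}\sigma_i}$, and its weight is the count from Lemma \ref{1-row}. Your step (2) just makes explicit the support-counting that the paper leaves implicit (note only the small slip that the row monomials are the squarefree monomials of degree at most $j$, not Stanley--Reisner generators), so nothing further is needed.
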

\begin{proof} The $(-2)^{t-1}$ coefficient follows with the same argument as in Lemma \ref{-2}. Then we realize that the number of points in a $t$-fold intersection $Y=X_{k_1}\cap \cdots \cap X_{k_t}\in \mathcal{L}_t^{a,s}$ is equal to the Hamming weight of the row corresponding to the union of the sets $$\bigcup_{i=1}^tX_{k_i}.$$ Note that this row might not actually exist in the generating matrix. However, we can consider it as a row in the full matrix where $j=\ell$. Finally, the remainder of the formula is realized by applying Lemma \ref{1-row}. \end{proof}

In order to show that the minimum distance is equal to the upper bound, it must be demonstrated that \begin{equation}\label{hard}\sum\limits_{i=1}^{h-j}\binom{\ell-j}{i}\leq\sum\limits_{e=1}^n\left[(-2)^{e-1}\sum\limits_{\substack{I\subseteq[n]\\ \abs{I}=e}}\sum\limits_{i=0}^{h-\abs{\cup \sigma_i}}\binom{\ell-\abs{\cup \sigma_i}}{i}\right].\end{equation} However, this proof turns out to be difficult. For the remainder, we examine a few cases.

\begin{proposition}\label{base} For $j>1$, $B(\sigma_1,\sigma_2)\geq\sum\limits_{i=1}^{h-j}\binom{\ell-j}{i}$.\end{proposition}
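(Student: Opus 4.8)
The plan is to peel the claim back to two elementary estimates on the single‑row weight function. Write $F(m):=\sum_{i=0}^{h-m}\binom{\ell-m}{i}$ for $0\le m\le\ell$; by Lemma \ref{1-row} (and the single‑row formula built into Proposition \ref{size}), $F(m)$ is exactly the Hamming weight of a row of $G(\ell,h,j)$ attached to a degree‑$m$ monomial, and $F(m)=0$ once $m>h$. Applying Proposition \ref{size} with $n=2$ gives $B(\sigma_1,\sigma_2)=F(r_1)+F(r_2)-2F(u)$ where $r_1:=|\sigma_1|$, $r_2:=|\sigma_2|$, $u:=|\sigma_1\cup\sigma_2|$, and the quantity to be bounded below is $\sum_{i=1}^{h-j}\binom{\ell-j}{i}=F(j)-1$. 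We may assume the two rows are distinct and, after relabelling, $r_1\le r_2$. The structural constraints are $0\le r_1\le r_2\le j$ and $r_2\le u\le\min(r_1+r_2,\ell)$, together with the key dichotomy: either $u\ge r_2+1$, or $u=r_2$, and in the latter case $\sigma_1\cup\sigma_2=\sigma_2$ forces $\sigma_1\subsetneq\sigma_2$, hence $r_1\le r_2-1\le j-1$.

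The only inputs needed are two consequences of the Pascal‑splitting already used throughout Section \ref{proof}: for $0\le m\le\ell-1$,
\[ F(m)=\binom{\ell-m-1}{h-m}+2F(m+1). \]
From this, $F(m)-F(m+1)=\binom{\ell-m-1}{h-m}+F(m+1)\ge 0$, so $F$ is non‑increasing on $[0,\ell]$ (in particular $F(r_1)\ge F(r_2)\ge F(j)$ and $F(r_1)\ge F(j-1)$ in the ranges above), and $F(m)-2F(m+1)=\binom{\ell-m-1}{h-m}\ge 0$, i.e. the doubling bound $F(m)\ge 2F(m+1)$.

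With these in hand the argument is a two‑line case split. If $u\ge r_2+1$, then $2F(u)\le 2F(r_2+1)\le F(r_2)$ by monotonicity and the doubling bound, so $B(\sigma_1,\sigma_2)\ge F(r_1)\ge F(j)$. If $u=r_2$, then $\sigma_1\subsetneq\sigma_2$ and $r_1\le j-1$, so $B(\sigma_1,\sigma_2)=F(r_1)-F(r_2)\ge F(r_1)-F(r_1+1)\ge\tfrac12 F(r_1)\ge\tfrac12 F(j-1)\ge F(j)$, the last step again using the doubling bound. In either case $B(\sigma_1,\sigma_2)\ge F(j)=\sum_{i=0}^{h-j}\binom{\ell-j}{i}\ge\sum_{i=1}^{h-j}\binom{\ell-j}{i}$, which is the assertion (with a little room to spare; the empty‑set case $\sigma_1=\emptyset$ is just $r_1=0$ in the second branch, since then $u=r_2$ automatically).

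I do not expect a genuine obstacle — the paper itself calls these techniques elementary — so the work is bookkeeping. The point to get right is the range analysis for $u$: one must be sure the dichotomy "$u=r_2\Rightarrow\sigma_1\subsetneq\sigma_2\Rightarrow r_1\le j-1$" is watertight, because the second branch really does need $r_1\le j-1$, and one must check the degenerate endpoints of the Pascal identity ($m>h$, where $F(m)=0$; $m=h$; $r_2=\ell$, which falls into the $u=r_2$ branch; and $u>h$, where $F(u)=0$ and the inequality is immediate). The one substantive choice is spotting the clean identity $F(m)=\binom{\ell-m-1}{h-m}+2F(m+1)$, which yields both monotonicity and the doubling bound at once and thereby collapses the whole estimate.
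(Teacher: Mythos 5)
Your proof is correct and takes essentially the same route as the paper: you start from the two-row case of Proposition \ref{size}, namely $B(\sigma_1,\sigma_2)=B(\sigma_1)+B(\sigma_2)-2B(\sigma_1\cup\sigma_2)$, and the engine of the estimate is the Pascal-derived identity giving the doubling bound $F(m)\geq 2F(m+1)$, which is exactly the paper's observation that the single-row Hamming weight ``decreases by more than half'' from one degree block to the next. Your case split on $|\sigma_1\cup\sigma_2|=r_2$ versus $|\sigma_1\cup\sigma_2|\geq r_2+1$ is a cleaner repackaging of the paper's two cases ($i_1=i_2$, and $\sigma_1\subsetneq\sigma_2$ with $i_2=i_1+1$), handles the edge cases more carefully, and in fact delivers the slightly stronger lower bound $\sum_{i=0}^{h-j}\binom{\ell-j}{i}$.
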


\begin{proof}Let $\abs{\sigma_1}=i_1$, $\abs{\sigma_2}=i_2$.  Without the loss of generality, assume (relabeling as necessary) $i_1\leq i_2$ and $\sigma_1\neq \sigma_2$ (else, $B(\sigma_1,\sigma_2)=0$).  Note that $\abs{\sigma_1\cup \sigma_2}\geq i_1+1$.  By Proposition \ref{size}, $$B(\sigma_1,\sigma_2)=B(\sigma_1)+B(\sigma_2)-2B(\sigma_1\cup \sigma_2)$$ $$=\sum\limits_{i=1}^{h-i_1}\binom{\ell-i_1}{i}+\sum\limits_{i=1}^{h-i_2}\binom{\ell-i_2}{i}-2\sum\limits_{i=1}^{h-\abs{\sigma_1\cup \sigma_2}}\binom{\ell-\abs{\sigma_1\cup \sigma_2}}{i}.$$  Since $\abs{\sigma_1\cup \sigma_2}\geq i_1+1$, $$\sum\limits_{i=1}^{h-i_1}\binom{\ell-i_1}{i}+\sum\limits_{i=1}^{h-i_2}\binom{\ell-i_2}{i}-2\sum\limits_{i=1}^{h-\abs{\sigma_1\cup \sigma_2}}\binom{\ell-\abs{\sigma_1\cup \sigma_2}}{i}$$$$\geq \sum\limits_{i=1}^{h-i_1}\binom{\ell-i_1}{i}+\sum\limits_{i=1}^{h-i_2}\binom{\ell-i_2}{i}-2\sum\limits_{i=1}^{h-(i_1+1)}\binom{\ell-(i_1+1)}{i}.$$  Hence, we prove the inequality for $\abs{\sigma_1\cup \sigma_2}=i_1 +1$.  There are now two cases: (1) $i_1=i_2$ or (2) $i_1 +1=i_2$ and $\sigma_1\subseteq \sigma_2$.  For Case 1, assume $i_1=i_2=a$, so the left-hand side of the proposed inequality becomes $$2\left(\sum\limits_{i=1}^{h-a}\binom{\ell-a}{i}\right)-2\left(\sum\limits_{i=1}^{h-(a+1)}\binom{\ell-(a+1)}{i}\right).$$  Since $j\geq a$, $\sum\limits_{i=1}^{h-j}\binom{\ell-j}{i}\leq\sum\limits_{i=1}^{h-a}\binom{\ell-a}{i}$, so it suffices to show $$2\left(\sum\limits_{i=1}^{h-a}\binom{\ell-a}{i}\right)-2\left(\sum\limits_{i=1}^{h-(a+1)}\binom{\ell-(a+1)}{i}\right)\geq\sum\limits_{i=1}^{h-a}\binom{\ell-a}{i}.$$  This last expression can be rewritten as $$\sum\limits_{i=1}^{h-a}\binom{\ell-a}{i}-2\left(\sum\limits_{i=1}^{h-(a+1)}\binom{\ell-(a+1)}{i}\right)\geq0.$$  We can phrase this last inequality by saying, ``The Hamming Weight decreases by more than half in each group of rows (each group of rows corresponds to polynomials of the same degree)."  Since $\binom{n}{k}=\binom{n-1}{k}+\binom{n-1}{k-1}$, $$\sum\limits_{i=1}^{h-a}\binom{\ell-a}{i}-2\left(\sum\limits_{i=1}^{h-(a+1)}\binom{\ell-(a+1)}{i}\right)$$$$=\sum\limits_{i=1}^{h-a}\left(\binom{\ell-a-1}{i}+\binom{\ell-a-1}{i-1}\right)-2\left(\sum\limits_{i=1}^{h-a-1}\binom{\ell-a-1}{i}\right)$$$$=2\left(\sum\limits_{i=1}^{h-a-1}\binom{\ell-a-1}{i}\right)-2\left(\sum\limits_{i=1}^{h-a-1}\binom{\ell-a-1}{i}\right)+\binom{\ell-a-1}{0}+\binom{\ell-a-1}{h-a}$$$$=1+\binom{\ell-a-1}{h-a}\geq 0.$$  Thus, for Case 1, $B(\sigma_1,\sigma_2)\geq\sum\limits_{i=1}^{h-j}\binom{\ell-j}{i}$.  

For Case 2, since the Hamming Weight decreases by more than half in each group of rows, $$B(\sigma_1,\sigma_2)\geq B(\sigma_1)-B(\sigma_2)\geq\frac{1}{2}B(\sigma_1)\geq B(\sigma_2)\geq B(\sigma_j),$$ where $B(\sigma_j)=\sum\limits_{i=1}^{h-j}\binom{\ell-j}{i}$, since $\abs{\sigma_j}\geq\abs{\sigma_2}$.  Thus, $B(\sigma_1,\sigma_2)\geq\sum\limits_{i=1}^{h-j}\binom{\ell-j}{i}.$\end{proof}

\begin{proposition}\label{formula} $B(\sigma_1,\dots,\sigma_n)=B(\sigma_1,\dots,\sigma_{n-1})+B(\sigma_n)-2B(\sigma_1\cup \sigma_n,\dots,\sigma_{n-1}\cup \sigma_n)$.\end{proposition}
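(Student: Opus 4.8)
The plan is to reduce Proposition \ref{formula} to a single pointwise parity identity and then verify that identity by a short case check. Over $\FF_2$ the Hamming weight of a sum of rows of the generating matrix equals the number of columns at which an odd number of those rows carry a $1$. Concretely, write $P=\bigcup_{X\in\A_{\Delta(\ell,h)}}X$ for the set of points (the vectors with entries in $\{0,1\}$ and at most $h$ nonzero entries), and for $p\in P$ and subsets $\tau_1,\dots,\tau_k\subseteq[\ell]$ put $N(p;\tau_1,\dots,\tau_k)=\#\{\,i:x_{\tau_i}(p)=1\,\}$ and let $\varepsilon(p;\tau_1,\dots,\tau_k)\in\{0,1\}$ be the parity of this number; rows $x_{\tau_i}$ that do not literally occur in $G(\ell,h,j)$ are read off the full ($j=\ell$) evaluation matrix, exactly as in the proof of Proposition \ref{size}. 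Then $B(\tau_1,\dots,\tau_k)=\sum_{p\in P}\varepsilon(p;\tau_1,\dots,\tau_k)$, and in particular $B(\sigma_n)=\sum_{p\in P}x_{\sigma_n}(p)$. Hence it suffices to prove, for each fixed $p\in P$,
\[
\varepsilon(p;\sigma_1,\dots,\sigma_n)=\varepsilon(p;\sigma_1,\dots,\sigma_{n-1})+x_{\sigma_n}(p)-2\,\varepsilon(p;\sigma_1\cup\sigma_n,\dots,\sigma_{n-1}\cup\sigma_n),
\]
because summing over $p\in P$ then yields the proposition.

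To prove this pointwise identity I would split on the value $x_{\sigma_n}(p)\in\{0,1\}$, using the elementary fact that since each coordinate of $p$ lies in $\{0,1\}$ one has $p_m^2=p_m$ and therefore $x_{\sigma_i\cup\sigma_n}(p)=x_{\sigma_i}(p)\,x_{\sigma_n}(p)$. If $x_{\sigma_n}(p)=0$, adjoining the $n$th row contributes nothing and every $x_{\sigma_i\cup\sigma_n}(p)=0$, so both sides of the identity collapse to $\varepsilon(p;\sigma_1,\dots,\sigma_{n-1})$. If $x_{\sigma_n}(p)=1$, set $m=N(p;\sigma_1,\dots,\sigma_{n-1})$; then $N(p;\sigma_1,\dots,\sigma_n)=m+1$ and $N(p;\sigma_1\cup\sigma_n,\dots,\sigma_{n-1}\cup\sigma_n)=m$, so the identity becomes an arithmetic statement relating the parities of $m$ and $m+1$, which one checks in the two sub-cases $m$ even and $m$ odd. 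Summing the verified identity over all $p\in P$ finishes the argument.

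I do not expect a substantial obstacle here; the one point that genuinely needs care is the bookkeeping convention that a ``row'' $x_{\sigma_i\cup\sigma_n}$ lying outside $G(\ell,h,j)$ (because $\sigma_i\cup\sigma_n$ is a non-face of $\Delta(\ell,h)$, or has degree larger than $j$) is interpreted as the corresponding row of the full $j=\ell$ evaluation matrix, so that the right-hand side is even meaningful — this is precisely the convention already adopted in Proposition \ref{size}, and with it the argument above is purely formal. An alternative route avoiding the pointwise analysis is to start from the expansion of $B(\sigma_1,\dots,\sigma_n)$ in Proposition \ref{size} and partition the index sets $I\subseteq[n]$ there according to whether $n\in I$: the $I$ with $n\notin I$ reassemble into $B(\sigma_1,\dots,\sigma_{n-1})$, the singleton $I=\{n\}$ gives $B(\sigma_n)$, and the sets $I=\{n\}\cup J$ with $J\neq\emptyset$ reassemble, after extracting one factor $-2$ and using $\sigma_n\cup\bigcup_{j\in J}\sigma_j=\bigcup_{j\in J}(\sigma_j\cup\sigma_n)$, into $-2\,B(\sigma_1\cup\sigma_n,\dots,\sigma_{n-1}\cup\sigma_n)$.
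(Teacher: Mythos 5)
Your proposal is correct, and your primary argument takes a genuinely different route from the paper. The paper proves the recursion by algebraic regrouping: it starts from the full inclusion--exclusion expansion of $B(\sigma_1,\dots,\sigma_n)$ in Proposition \ref{size}, splits the terms according to whether they involve $\sigma_n$, identifies the $\sigma_n$-free part as $B(\sigma_1,\dots,\sigma_{n-1})$, and uses $\sigma_{i}\cup\sigma_{j}\cup\sigma_n=(\sigma_{i}\cup\sigma_n)\cup(\sigma_{j}\cup\sigma_n)$ to recognize the remaining terms as $B(\sigma_n)-2B(\sigma_1\cup\sigma_n,\dots,\sigma_{n-1}\cup\sigma_n)$ --- which is exactly the ``alternative route'' you sketch in your final sentences. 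Your main argument instead verifies the identity column by column: writing $B$ as $\sum_{p}\varepsilon(p;\cdot)$ and checking the pointwise relation $\varepsilon(p;\sigma_1,\dots,\sigma_n)=\varepsilon(p;\sigma_1,\dots,\sigma_{n-1})+x_{\sigma_n}(p)-2\varepsilon(p;\sigma_1\cup\sigma_n,\dots,\sigma_{n-1}\cup\sigma_n)$ in the cases $x_{\sigma_n}(p)=0$ and $x_{\sigma_n}(p)=1$ (the latter split by the parity of $m$), using $x_{\sigma_i\cup\sigma_n}(p)=x_{\sigma_i}(p)x_{\sigma_n}(p)$ on $\{0,1\}$-points; this checks out in all cases. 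What your route buys is independence from Proposition \ref{size}: the recursion is established directly and elementarily, it is insensitive to bookkeeping issues such as repeated rows among the $\sigma_i\cup\sigma_n$, and it only needs the same convention the paper already uses (rows outside $G(\ell,h,j)$ read from the full $j=\ell$ matrix). What the paper's regrouping buys is that, having already proved Proposition \ref{size}, the recursion falls out as a purely formal consequence with no further reference to the point set.
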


\begin{proof}  From Proposition \ref{size}, \begin{equation}\label{start} B(\sigma_1,\dots,\sigma_n)=\sum\limits_{i=1}^nB(\sigma_i)
 -2\sum\limits_{i_1,i_2}B(\sigma_{i_1}\cup \sigma_{i_2})
+\dots+(-2)^{n-1}B(\sigma_1\cup\dots\cup \sigma_n).\end{equation}  Rearranging so that all terms involving $\sigma_n$ on the right-hand side of \ref{start} are isolated yields

\begin{align}\label{next}  \left[\sum\limits_{i=1}^{n-1}B(\sigma_i)
 -2\sum\limits_{i_1,i_2\neq n}B(\sigma_{i_1}\cup \sigma_{i_2})
+\dots+(-2)^{n-2}B(\sigma_1\cup\dots\cup \sigma_{n-1})\right]\\ \notag + \left[B(\sigma_n) -2\sum\limits_{i=1}^{n-1}B(\sigma_i\cup \sigma_n) +\cdots +(-2)^{n-1}B(\sigma_1\cup \cdots \cup \sigma_n)\right].\end{align} Again, by Proposition \ref{size}, \begin{equation}\label{n-1} B(\sigma_1,\dots,\sigma_{n-1})=\sum\limits_{i=1}^{n-1}B(\sigma_i)
 -2\sum\limits_{i_1,i_2\neq n}B(\sigma_{i_1}\cup \sigma_{i_2})
+\dots+(-2)^{n-2}B(\sigma_1\cup\dots\cup \sigma_{n-1}),\end{equation} and, since $\sigma_i\cup \sigma_j\cup \sigma_n=(\sigma_i\cup \sigma_n)\cup(\sigma_j\cup \sigma_n)$, \begin{eqnarray}\label{withn} &B(\sigma_1\cup \sigma_n,\dots,\sigma_{n-1}\cup \sigma_n)&\\ \notag &=\sum\limits_{i=1}^{n-1}B(\sigma_i\cup \sigma_n)
 -2\sum\limits_{i_1,i_2}B(\sigma_{i_1}\cup \sigma_{i_2}\cup \sigma_n)
+\dots+(-2)^{n-2}B(\sigma_1\cup\dots\cup \sigma_n).&\end{eqnarray} Substituting \ref{n-1} and \ref{withn} back into the right-hand side of \ref{next} yields the claimed formula.\end{proof}

\begin{proposition}\label{last} Let $\abs{\sigma_1}\leq\abs{\sigma_2}\leq\dots\leq\abs{\sigma_k}\leq\abs{\sigma_{k+1}}=\dots=\abs{\sigma_{n-1}}=\abs{\sigma_n}$ for $k\leq n$.  If $\abs{\sigma_k}<(n-k)+\abs{\sigma_n}$, then $B(\sigma_1,\dots,\sigma_n)\geq B(\sigma_n)$. \end{proposition}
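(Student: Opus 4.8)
The plan is to argue by induction on $n-k$, the number of ``top-size'' sets among $\sigma_1,\dots,\sigma_n$, using Proposition \ref{formula} to peel off one of these maximal-cardinality sets at a time. When $n-k=0$ all the $\sigma_i$ have the same cardinality, which is the base case; here one should verify directly that summing rows of equal weight cannot collapse the Hamming weight below that of a single such row. This base case should follow by the same ``the Hamming weight drops by more than half with each degree increase'' observation used in the proof of Proposition \ref{base}, applied to the expansion in Proposition \ref{size}: the $e=1$ term contributes $n$ copies of $B(\sigma_n)$, while the magnitude of all higher-order terms is controlled because each union $\sigma_i\cup\sigma_j$ (for distinct equal-size sets) has cardinality at least $\abs{\sigma_n}+1$, so $B(\sigma_{i}\cup\sigma_j)\le \tfrac12 B(\sigma_n)$, and similarly for deeper intersections.

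For the inductive step, apply Proposition \ref{formula} to split off $\sigma_n$:
\begin{equation}\notag
B(\sigma_1,\dots,\sigma_n)=B(\sigma_1,\dots,\sigma_{n-1})+B(\sigma_n)-2B(\sigma_1\cup\sigma_n,\dots,\sigma_{n-1}\cup\sigma_n).
\end{equation}
The first term $B(\sigma_1,\dots,\sigma_{n-1})$ is a list of $n-1$ sets with at least $n-1-k$ of them of the top size $\abs{\sigma_n}$ and the smallest still satisfying $\abs{\sigma_k}<(n-k)+\abs{\sigma_n}$, so (after checking the hypothesis migrates correctly, possibly with the slightly weaker constraint $\abs{\sigma_k}<(n-1-k)+\abs{\sigma_n}$ when $k<n-1$) the induction hypothesis gives $B(\sigma_1,\dots,\sigma_{n-1})\ge B(\sigma_n)$. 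The subtracted term $2B(\sigma_1\cup\sigma_n,\dots,\sigma_{n-1}\cup\sigma_n)$ is a list of $n-1$ sets each of cardinality at least $\abs{\sigma_n}+1$ --- here is where $\abs{\sigma_k}<(n-k)+\abs{\sigma_n}$, equivalently $\sigma_k\not\supseteq\sigma_n$ in a suitably generic sense, is used to force a strict increase when we union with $\sigma_n$ --- so one bounds $2B(\sigma_1\cup\sigma_n,\dots)\le 2\cdot\tfrac12 B(\sigma_n)=B(\sigma_n)$, again using that $B$ of a list of sets of size $\ge a$ is at most $B$ of a single set of size $a$, combined with the more-than-half decrease per degree. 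Assembling the three pieces: $B(\sigma_1,\dots,\sigma_n)\ge B(\sigma_n)+B(\sigma_n)-B(\sigma_n)=B(\sigma_n)$.

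The main obstacle is the middle estimate, namely showing $B(\tau_1,\dots,\tau_{n-1})\le \tfrac12 B(\sigma_n)$ when every $\abs{\tau_i}\ge\abs{\sigma_n}+1$. A crude bound $B(\tau_1,\dots,\tau_{n-1})\le\sum_i B(\tau_i)\le (n-1)B(\sigma_n')$ (where $\abs{\sigma_n'}=\abs{\sigma_n}+1$) is too lossy once $n-1\ge 3$, so one genuinely needs that the alternating-sign structure in Proposition \ref{size} keeps $B(\tau_1,\dots,\tau_{n-1})$ small; the cleanest route is to observe $0\le B(\tau_1,\dots,\tau_{n-1})\le B(\tau_1\cup\cdots)$-type monotonicity is false in general, so instead I would induct \emph{simultaneously} on this auxiliary claim, or invoke that $B(\tau_1,\dots,\tau_{n-1})$ is a Hamming weight hence automatically lies in $[0, B(\tau_{i_0})]$ for whichever $\tau_{i_0}$ supports it --- in fact $B(\tau_1,\dots,\tau_{n-1})\le \min_i B(\tau_i)$ is false too, but $B(\tau_1,\dots,\tau_{n-1})\le \max_i B(\tau_i)=B(\sigma_n')\le\tfrac12 B(\sigma_n)$ does hold because the support of a sum of $0/1$ vectors is contained in the union of their supports and a further weight argument... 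This is precisely the delicate point, and I expect the honest proof to reduce it to the inequality $\sum_{i=1}^{h-a-1}\binom{\ell-a-1}{i}\le\tfrac12\sum_{i=1}^{h-a}\binom{\ell-a}{i}$ already extracted in the proof of Proposition \ref{base}, iterated and combined with a careful bookkeeping of how many sets of each size appear after unioning with $\sigma_n$.
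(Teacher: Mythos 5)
Your inductive step hinges on an estimate that is not established and, as stated, does not follow: after splitting off $\sigma_n$ with Proposition \ref{formula}, you need $B(\sigma_1\cup\sigma_n,\dots,\sigma_{n-1}\cup\sigma_n)\le\tfrac12 B(\sigma_n)$ so that the subtracted term is absorbed in one stroke. The support argument only yields the factor-one bound: every nonzero entry of each row $x_{\sigma_i\cup\sigma_n}$ is a nonzero entry of the row $x_{\sigma_n}$, hence so is every nonzero entry of their sum, giving $B(\sigma_1\cup\sigma_n,\dots,\sigma_{n-1}\cup\sigma_n)\le B(\sigma_n)$; but the supports of the different $x_{\sigma_i\cup\sigma_n}$ sit inside supports of \emph{different} rows of degree $\abs{\sigma_n}+1$, and their union can essentially fill the support of $x_{\sigma_n}$, so neither $\le\max_i B(\sigma_i\cup\sigma_n)$ nor the $\tfrac12$ bound follows --- you acknowledge this and the argument trails off at exactly this point. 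The paper's proof never needs the $\tfrac12$: it uses only the factor-one bound, accepts the resulting deficit $B(\sigma_1,\dots,\sigma_n)\ge B(\sigma_1,\dots,\sigma_{n-1})-B(\sigma_n)$, peels off $\sigma_{n-1},\dots,\sigma_{k+1}$ the same way to reach $B(\sigma_1,\dots,\sigma_k)-(n-k)B(\sigma_n)$, applies the induction hypothesis there (induction on $n$, base case $n=2$ given by Proposition \ref{base}) to get $B(\sigma_k)-(n-k)B(\sigma_n)$, and then repays the accumulated deficit through a telescoping chain of auxiliary sets $T_{k+m}$ with $\abs{T_{k+m}}=\abs{\sigma_k}+m$, using the ``weight drops by more than half per degree'' estimate from Proposition \ref{base}. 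That final chain is precisely where the hypothesis relating $\abs{\sigma_k}$, $n-k$ and $\abs{\sigma_n}$ is used --- it is a cardinality-gap condition guaranteeing there are enough intermediate sizes $T_{k+1},\dots,T_{n-1}$, not the non-containment/genericity condition you read it as.

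The base case of your induction on $n-k$ is a second genuine gap. When $n-k=0$ the hypothesis $\abs{\sigma_k}<(n-k)+\abs{\sigma_n}$ becomes vacuously false (and $k=n$ does not force all sizes equal; it merely removes the equality constraints), while the statement you propose to verify there --- that adding any number of distinct rows of equal degree never produces weight below that of a single such row --- is essentially the hard content of Conjecture \ref{conjecture}: it is proved in the paper only for degree one (Theorem \ref{main}) and is open in general. Your sketch for it does not close, because in the expansion of Proposition \ref{size} the order-$e$ terms carry coefficients $(-2)^{e-1}$ and there are $\binom{n}{e}$ of them, so the ``each union is at least one degree deeper, hence contributes at most half'' bound cannot control the alternating sum once $n$ grows. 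The paper sidesteps this entirely by inducting on $n$ with Proposition \ref{base} as the base case.
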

\begin{proof}
The proof will be by induction on $n$.  The base case $n=2$ is covered by the proof of Proposition \ref{base}.  Thus, assume $B(\sigma_1,\dots,\sigma_x)\geq B(\sigma_x)$ for $x<n$.  It must be demonstrated that $B(\sigma_1,\dots,\sigma_n)\geq B(\sigma_n)$.  By the formula in Proposition \ref{formula}, $$B(\sigma_1,\dots,\sigma_n)=B(\sigma_1,\dots,\sigma_{n-1})+B(\sigma_n)-2B(\sigma_1\cup \sigma_n,\dots,\sigma_{n-1}\cup \sigma_n).$$  Note that $B(\sigma_1\cup \sigma_n,\dots,\sigma_{n-1}\cup \sigma_n)$ is the Hamming weight of $n-1$ row vectors being added together, fulfilling the property that all nonzero entries of the rows $\sigma_i\cup\sigma_n$ are also nonzero entries in the row $\sigma_n$.  Hence, $$B(\sigma_n)\geq B(\sigma_1\cup \sigma_n,\dots,\sigma_{n-1}\cup \sigma_n).$$  Thus, $$B(\sigma_1,\dots,\sigma_n)\geq B(\sigma_1,\dots,\sigma_{n-1})+B(\sigma_n)-2B(\sigma_n)$$$$=B(\sigma_1,\dots,\sigma_{n-1})-B(\sigma_n).$$ Then using Proposition \ref{formula} repeatedly, we have $$B(\sigma_1,\dots,\sigma_n)\geq B(\sigma_1,\dots,\sigma_{n-2})+B(\sigma_{n-1})-2B(\sigma_1\cup \sigma_{n-1},\dots,\sigma_{n-2}\cup \sigma_{n-1})-B(\sigma_n)$$$$\geq B(\sigma_1,\dots,\sigma_{n-2})+B(\sigma_{n-1})-2B(\sigma_{n-1})-B(\sigma_n)$$$$=B(\sigma_1,\dots,\sigma_{n-2})-B(\sigma_{n-1})-B(\sigma_n)$$$$\geq \ldots\geq B(\sigma_1,\dots,\sigma_k)-(B(\sigma_{k+1})+\dots+B(\sigma_n)).$$  By the induction hypothesis, $$B(\sigma_1,\dots,\sigma_n)\geq B(\sigma_1,\dots,\sigma_k)-(B(\sigma_{k+1})+\dots+B(\sigma_n))$$$$\geq B(\sigma_k)-(B(\sigma_{k+1})+\dots+B(\sigma_n))$$$$=B(\sigma_k)-(n-k)B(\sigma_n),$$ since $\abs{\sigma_{k+1}}=\dots=\abs{\sigma_n}$.  Let $T_{k+m}\subseteq [\ell ]$ such that $\abs{T_{k+m}}=\abs{\sigma_k}+m$ (so $B(\sigma_n)\leq B(T_{k+m}$) for $m\leq (n-k)$) and recall from Proposition \ref{base} that the Hamming weight between groups of rows decreases by more than half (also, note that $T_{k+i}$ is a row with higher Hamming weight than the row $T_{k+i+1}$).  Thus, $$B(\sigma_1,\dots,\sigma_n)\geq B(\sigma_k)-(n-k)B(\sigma_n)$$$$\geq [B(\sigma_k) - B(T_{k+1})] -B(T_{k+2})-\dots-B(\sigma_n)$$$$\geq [B(T_{k+1})-B(T_{k+2})] - B(T_{k+3}) - \dots - B(\sigma_n)$$$$\geq\ldots\geq B(T_{n-1})-B(\sigma_n)\geq B(\sigma_n).$$  Note that there are exactly enough $T_{k+m}$ terms for the above inequalities since $\abs{\sigma_k}<(n-k)+\abs{\sigma_n}$.  Thus, for $\abs{\sigma_k}<(n-k)+\abs{\sigma_n}$, $B(\sigma_1,\dots,\sigma_n)\geq B(\sigma_n)$.  \end{proof}

With respect to Equation \ref{hard} Propositions \ref{size}, \ref{base}, \ref{formula},  and \ref{last} give evidence for the following conjecture.

\begin{conjecture}\label{conjecture}
 
The minimum distance of $K(\ell ,h,j)$ is $$d=\sum\limits_{i=1}^{h-j}\binom{\ell-j}{i}.$$
 
\end{conjecture}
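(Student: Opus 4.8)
The plan is to prove the two matching bounds separately. The upper bound is essentially Lemma \ref{1-row}: the evaluation of a single degree-$j$ monomial $x_\sigma$ (a row of the last row block $RB_j$) is a codeword whose Hamming weight realizes the claimed quantity, so the entire content is the lower bound, i.e.\ the inequality \eqref{hard}. An arbitrary nonzero codeword is a sum of distinct rows of $G(\ell,h,j)$ indexed by sets $\sigma_1,\dots,\sigma_n\subseteq[\ell]$ with $\abs{\sigma_i}\le j$, and its Hamming weight is $B(\sigma_1,\dots,\sigma_n)$, so one must show $B(\sigma_1,\dots,\sigma_n)\ge\sum_{i=1}^{h-j}\binom{\ell-j}{i}$. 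I would first reduce this to the cleaner statement
$$B(\sigma_1,\dots,\sigma_n)\ \ge\ B(\sigma_*),$$
where $\sigma_*$ denotes any $\sigma_i$ of greatest cardinality. This reduction rests on the monotonicity of $a\mapsto\sum_{i=0}^{h-a}\binom{\ell-a}{i}$: a single Pascal step gives $\sum_{i=0}^{h-a}\binom{\ell-a}{i}=2\sum_{i=0}^{h-a-1}\binom{\ell-a-1}{i}+\binom{\ell-a-1}{h-a}$, so this quantity more than halves as $a$ increases --- this is the ``the Hamming weight decreases by more than half between consecutive groups of rows'' principle from the proof of Proposition \ref{base} --- whence $B(\sigma_*)\ge\sum_{i=0}^{h-j}\binom{\ell-j}{i}$, which dominates the target $\sum_{i=1}^{h-j}\binom{\ell-j}{i}$ because $\abs{\sigma_*}\le j$.

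The clean inequality $B(\sigma_1,\dots,\sigma_n)\ge B(\sigma_*)$ is exactly the conclusion of Proposition \ref{last} with its cardinality hypothesis removed, so the task is to run that induction unconditionally. The engine is the recursion of Proposition \ref{formula},
$$B(\sigma_1,\dots,\sigma_n)=B(\sigma_1,\dots,\sigma_{n-1})+B(\sigma_n)-2B(\sigma_1\cup\sigma_n,\dots,\sigma_{n-1}\cup\sigma_n),$$
together with the subordination bound $B(\sigma_n)\ge B(\sigma_1\cup\sigma_n,\dots,\sigma_{n-1}\cup\sigma_n)$, which holds because every nonzero coordinate of a $(\sigma_i\cup\sigma_n)$-row lies in the support of the $\sigma_n$-row. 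These give $B(\sigma_1,\dots,\sigma_n)\ge B(\sigma_1,\dots,\sigma_{n-1})-B(\sigma_n)$; iterating down to $\sigma_*$ and then re-absorbing the accumulated correction terms one degree at a time, via the ``more than half'' estimate above, is precisely the telescoping run that closes Proposition \ref{last}. The place where the hypothesis of Proposition \ref{last} is actually needed --- and therefore the only new work --- is the regime in which many of the $\sigma_i$ share a common size well above the others: there the crude bound $B(\sigma_n)\ge B(\sigma_1\cup\sigma_n,\dots)$ wastes too much, and one must instead re-expand $B(\sigma_1\cup\sigma_n,\dots,\sigma_{n-1}\cup\sigma_n)$ by Proposition \ref{size} and control the alternating $(-2)^{e-1}$ sums directly.

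Controlling those sums is, I expect, the genuine obstacle. In the $j=1$ proof of Theorem \ref{main} they collapsed because every $\sigma_i$ is a singleton, so $\abs{\bigcup_{i\in I}\sigma_i}=\abs{I}$ identically and everything comes down to the one-variable binomial identities for $g_i^s$ and $d_i^s$ (Lemma \ref{tech} and the computation after it). For $j>1$ the exponent $\abs{\bigcup_{i\in I}\sigma_i}$ is a real invariant of the overlap pattern of $\{\sigma_1,\dots,\sigma_n\}$, so the required identity becomes an inclusion--exclusion statement indexed by the union-semilattice generated by the $\sigma_i$, and the sign cancellations no longer telescope for free. My proposed route is to first show --- by a compression (shifting) argument making the $\sigma_i$ maximally overlapping without altering their cardinalities, or by a convexity argument for $B$ in the overlap data --- that the minimum of $B(\sigma_1,\dots,\sigma_n)$ over all configurations with a prescribed cardinality multiset is attained on a short list of extremal families (nested chains, and families of equal-size sets that pairwise meet in a $(j-1)$-element subset), and then to verify \eqref{hard} on those few families by the degree-by-degree telescope of the previous paragraph. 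Establishing that these really are the minimizers of $B$ is the heart of the conjecture. In parallel it is worth trying the generating-function approach mentioned for Theorem \ref{main}: encode $B(\sigma_1,\dots,\sigma_n)$ as a coefficient extracted from a suitable product $\prod_i f_i$ of explicit factors and read \eqref{hard} off a positivity statement about that product.
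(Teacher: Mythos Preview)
The statement you are attempting to prove is a \emph{conjecture} in the paper, not a theorem: the authors explicitly write that ``the minimum distance for $j>1$ is at this time out of hand for the authors,'' and the surrounding Propositions \ref{size}--\ref{last} are offered only as evidence. So there is no proof in the paper to compare against; the question is whether your proposal actually closes the gap the authors could not.

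It does not. Your first three paragraphs recapitulate exactly what the paper already proves: the upper bound is Lemma \ref{1-row}; the recursion is Proposition \ref{formula}; the subordination bound and the ``more-than-half'' telescoping are the content of Propositions \ref{base} and \ref{last}. You then correctly identify the obstruction --- Proposition \ref{last} carries a cardinality hypothesis $\abs{\sigma_k}<(n-k)+\abs{\sigma_n}$ that you need to remove --- and you correctly diagnose why: when many $\sigma_i$ share a common large size, the crude bound $B(\sigma_n)\ge B(\sigma_1\cup\sigma_n,\dots,\sigma_{n-1}\cup\sigma_n)$ loses too much. But your proposed remedy is not a proof: you assert that a compression or convexity argument should reduce to ``a short list of extremal families (nested chains, and families of equal-size sets that pairwise meet in a $(j-1)$-element subset),'' yet you give no mechanism for why $B$ is monotone under shifting, no convexity statement, and no argument that your candidate extremal families are exhaustive. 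That reduction is precisely the missing idea, and you yourself concede as much (``Establishing that these really are the minimizers of $B$ is the heart of the conjecture''). The generating-function suggestion at the end is likewise a direction, not an argument.

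In short, your proposal is an accurate survey of the paper's partial results together with a plausible research plan, but it contains no step that goes beyond what the paper already has. The conjecture remains open after your proposal exactly as it was before.
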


\section{Hamming codes}\label{Hamming}

Let $\mathcal{H}_\ell$ be the binary $\ell^{\mathrm{th}}$ Hamming code. Then the following proposition shows that the Boolean subspace arrangement codes where the simplicial complex is homotopically a sphere with $j=\ell -2$ is a Hamming code.
\begin{proposition}
 The codes $K(\ell,\ell-1,\ell-2)$ and $\mathcal{H}_\ell$ are permutation equivalent.
\end{proposition}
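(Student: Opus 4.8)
The plan is to exhibit an explicit parity check matrix for $K(\ell,\ell-1,\ell-2)$ whose columns are precisely the $2^\ell-1$ nonzero vectors of $\FF_2^\ell$; since that is exactly the defining property of (a column permutation of) the Hamming parity check matrix, permutation equivalence with $\mathcal{H}_\ell$ follows immediately.

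First I would record the shape of the generating matrix $G=G(\ell,\ell-1,\ell-2)$ from Section \ref{proof}. Its rows are indexed by the subsets $\sigma\subseteq[\ell]$ with $|\sigma|\le\ell-2$, its columns by the subsets $\tau\subsetneq[\ell]$ (the point $e_\tau$, with $\tau=\emptyset$ giving the origin), and the $(\sigma,\tau)$ entry is $x_\sigma(e_\tau)$, which equals $1$ when $\sigma\subseteq\tau$ and $0$ otherwise. By Corollary \ref{k-dimension} the dimension of $K(\ell,\ell-1,\ell-2)$ is $\sum_{i=0}^{\ell-2}\binom{\ell}{i}=2^\ell-1-\ell$, which is exactly the number of rows of $G$; hence those rows form a basis of the code and $\dim K(\ell,\ell-1,\ell-2)^\perp=\ell$.

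Next I would write down $\ell$ candidate parity checks: for $i=1,\dots,\ell$ let $v^{(i)}$ be the vector indexed by $\tau\subsetneq[\ell]$ whose $\tau$-entry is $(x_i+1)(e_\tau)$, i.e.\ $1$ when $i\notin\tau$ and $0$ when $i\in\tau$. The one thing that needs genuine checking is that each $v^{(i)}$ is orthogonal over $\FF_2$ to every row $g_\sigma$ of $G$: their inner product counts, modulo $2$, the subsets $\tau$ with $\sigma\subseteq\tau\subsetneq[\ell]$ and $i\notin\tau$; when $i\in\sigma$ there are none, and when $i\notin\sigma$ there are $2^{\ell-|\sigma|-1}$ of them, which is even precisely because $|\sigma|\le\ell-2$. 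This parity count is where the specific value $j=\ell-2$ is used, and it is essentially the only real content of the argument — I expect it to be the main (though not deep) obstacle, with everything else being bookkeeping. A one-line computation using $\tau=[\ell]\setminus\{k\}$ shows $v^{(1)},\dots,v^{(\ell)}$ are linearly independent, so they span the $\ell$-dimensional space $K(\ell,\ell-1,\ell-2)^\perp$, and the matrix $H$ with these rows is a parity check matrix for $K(\ell,\ell-1,\ell-2)$.

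Finally I would identify $H$: its column indexed by $\tau$ is the indicator vector of $[\ell]\setminus\tau$, and as $\tau$ ranges over the proper subsets of $[\ell]$ the complement ranges bijectively over the nonempty subsets of $[\ell]$, that is, over all nonzero vectors of $\FF_2^\ell$, each exactly once. Thus $H$ differs from the standard Hamming parity check matrix only by a permutation of columns, and therefore $K(\ell,\ell-1,\ell-2)$ and $\mathcal{H}_\ell$ are permutation equivalent. As a consistency check this forces the minimum distance to equal $3$, matching the upper bound $\sum_{i=0}^{1}\binom{2}{i}=3$ supplied by Lemma \ref{1-row}. Alternatively one could conclude by citing the uniqueness of binary Hamming codes once the parameters $[2^\ell-1,\,2^\ell-1-\ell,\,3]$ are in hand, but writing $H$ out explicitly sidesteps that and is no more work.
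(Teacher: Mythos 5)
Your proof is correct, but it takes a genuinely different route from the paper. The paper's proof is a parameter count plus a uniqueness theorem: it computes the length $2^\ell-1$ (Equation \ref{k-length}), the dimension $2^\ell-1-\ell$ (Corollary \ref{k-dimension}), and the minimum distance $3$ (upper bound from Lemma \ref{1-row}, lower bound cited from Huffman--Pless), and then invokes Theorem 1.8.1 of \cite{WCFVP}, which says any $[2^r-1,\,2^r-1-r,\,3]$ binary code is equivalent to $\mathcal{H}_r$. You instead build an explicit parity check matrix: the vectors $v^{(i)}$ with $\tau$-entry $(x_i+1)(e_\tau)$ are orthogonal to every row $g_\sigma$ because the number of $\tau$ with $\sigma\subseteq\tau\subseteq[\ell]\setminus\{i\}$ is $2^{\ell-1-|\sigma|}$, even exactly when $|\sigma|\le\ell-2$ (this is where $j=\ell-2$ enters, and your case split $i\in\sigma$ versus $i\notin\sigma$ is right); the identity submatrix on the columns $\tau=[\ell]\setminus\{k\}$ gives independence, the dimension count from Corollary \ref{k-dimension} shows they span the dual, and the columns of $H$ are the indicators of the complements $[\ell]\setminus\tau$, i.e.\ each nonzero vector of $\FF_2^\ell$ exactly once. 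What your approach buys is self-containedness: it avoids both the uniqueness theorem and any minimum-distance argument (indeed $d=3$ falls out as a corollary rather than being an input), and it makes the equivalence concrete, in the spirit of the remark following the proposition in the paper. What the paper's approach buys is brevity, at the cost of leaning on two external results from \cite{WCFVP}. One presentational note: your claim that the rows of $G$ form a basis is exactly Proposition \ref{dimbsc}/Corollary \ref{k-dimension}, so citing those (as you do) is legitimate, but you should say explicitly that the point set consists of all $e_\tau$ with $0\le|\tau|\le\ell-1$, origin included, since the displayed description of the point set in Section \ref{proof} starts at $|\tau|=1$ while the length formula and the examples include the origin.
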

\begin{proof}
By Theorem 1.8.1 in Huffman and Pless \cite{WCFVP}, any $[2^r-1,2^r-1-r,3]$ code is equivalent to $\mathcal{H}_r$.  The length of $K(\ell, \ell-1, \ell-2)$ by Equation \ref{k-length} is $$\sum\limits_{i=0}^h\binom{\ell}{i}=\sum\limits_{i=0}^{\ell-1}\binom{\ell}{i}=\sum\limits_{i=0}^{\ell}\binom{\ell}{i}-\binom{\ell}{\ell}=2^\ell-1.$$  The dimension of $K(\ell,\ell-1,\ell-2)$ by Corollary \ref{k-dimension} is $$\sum\limits_{i=0}^j f_i=\sum\limits_{i=0}^{\ell-2} f_i=\sum\limits_{i=0}^{\ell-2}\binom{\ell}{i}=\sum\limits_{i=0}^\ell\binom{\ell}{i}-\binom{\ell}{\ell-1}-\binom{\ell}{\ell}=2^\ell-\ell-1=2^\ell-1-\ell.$$  The upper bound on the minimum distance Lemma \ref{1-row} is by $$d\leq \sum\limits_{i=0}^{h-j}\binom{\ell-j}{i}=\sum\limits_{i=0}^{\ell-(\ell-1)}\binom{\ell-(\ell-2)}{i}=\sum\limits_{i=0}^1\binom{2}{i}=\binom{2}{0}+\binom{2}{1}=3.$$  By Corollary 1.4.14 in \cite{WCFVP}, since the rows in the matrix generating $K(\ell,\ell-1,\ell-2)$ are linearly independent, $d\geq 3$, implying $d=3$.  Thus, $K(\ell,\ell-1,\ell-2)\cong\mathcal{H}_\ell.$\end{proof}

\begin{remark}
 Since the two codes are permutation-equivalent, there must be some means by which to transform $K(\ell,\ell-1,\ell-2)$ into $\mathcal{H}_\ell$.  This process is achieved by adding rows, permuting columns, and permuting rows in the matrix generating $K(\ell,\ell-1,\ell-2)$.  Since the matrix (if the points in $V(I)$ are ordered appropriately) can be made into an upper-triangular matrix adjoined to a $(2^\ell-\ell-1)\times\ell$ matrix, adding all rows to rows above them will result in a $[I_{2^\ell-\ell-1}|A]$ matrix.  In the $A$ block (corresponding to points in $V(I)$ that have only one 0 [and thus $\ell-1$ ones] as a component), there are (since there are $\ell-2$ rows) $\sum\limits_{0}^{\ell-2-j}\binom{\ell-1-j}{i}=2^{\ell-1-j}-1$ ones, which is odd.  Thus, when all the rows are added, the $A$ block remains unchanged. This $A$ block consists of binary strings of length $\ell$ that corresponding to polynomials up to degree $\ell-2$ being evaluated on points with $\ell-1$ ones.  These strings, once transposed and combined with a transposed identity matrix, will consist of all numbers in binary from 1 to $2^\ell$, indicating its equivalence to $\mathcal{H}_\ell$.
\end{remark}

At this time the authors do not know, but suspect that there are other values of $\ell$, $h$, and $j$ where the codes $K(\ell ,h,j)$ are permutaion equivalent to other well known codes.

\bibliographystyle{amsplain}

\bibliography{bergbib}

\end{document}